\DeclareMathOperator{\arccosh}{arccosh}
\newtheorem{theorem}{Theorem}[section]
\newtheorem{prop}{Proposition}[section]
\newtheorem{corollary}{Corollary}[section]
\newtheorem{remark}{\textbf{Remark}}[section]
\def\rr{\mathbb{R}}
\def\ss{\mathbb{S}}
\def\hh{\mathbb{H}}
\def\bb{\mathbb{B}}
\def\O{\Omega}
\def\p{\partial}
\def\p{\partial}
\def\S{{\Sigma}}
\def\<{\langle}
\def\>{\rangle}
\def\div{{\rm div}}
\def\n{\nabla}
\def\G{\Gamma}
\def\ode{\bar{\Delta}}
\def\on{\bar{\nabla}}
\def\De{\Delta}
\numberwithin{equation} {section}
\begin{document}

\title[\tiny{A Minkowski type inequality with free boundary in space forms}]{A Minkowski type inequality with free boundary in space forms}
\author{Jinyu Guo}
\address{Department of Mathematics, Tsinghua University, Beijing, 100084, China}
\email{guojinyu14@163.com}


\begin{abstract}
In this paper, we consider a Minkowski inequality for a domain supported on any totally umbilical hypersurface with free boundary in space forms. We generalize the main result in \cite{Xia} into free boundary case and obtain a free boundary version of optimal weighted Minkowski inequality in space forms.
\end{abstract}

\date{}

\subjclass[2010]{Primary 53C21, Secondary 53C24}
\keywords{Rigidity, Minkowski type inequality, free boundary hypersurfaces, Alexandrov-Fenchel type inequality, almost Schur lemma}

\maketitle





\section{Introduction}
In the celebrated paper \cite{Reilly4}, Reilly applied Reilly formula \cite{Reilly2,Reilly3} to prove the following classical Minkowski inequality:
\begin{theorem}[\cite{Reilly4}]\label{Reilly-thm}\
Let $\left(\Omega^{n}, g\right)$ be a compact $n$-dimensional Riemannian manifold with smooth convex boundary $\partial\Omega=\Sigma$ and non-negative Ricci curvature. Assume $H$ is mean curvature of $\Sigma$. Then
\begin{equation}\label{Reilly}
  \operatorname{Area}(\Sigma)^{2} \geq \frac{n}{n-1} \operatorname{Vol}(\Omega) \int_{\Sigma} H d A,
\end{equation}
the equality in \eqref{Reilly} holds if and only if $\Omega$ is isometric to an Euclidean ball.
\end{theorem}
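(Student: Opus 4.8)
The plan is to derive \eqref{Reilly} from Reilly's integral formula applied to the solution of a carefully chosen \emph{Neumann} problem, engineered so that the boundary integrand collapses to a constant multiple of $\int_\Sigma H\,dA$. First I would solve
\[
\begin{cases}
\Delta f = 1 & \text{in } \Omega,\\
\dfrac{\partial f}{\partial \nu} = c & \text{on } \Sigma,
\end{cases}
\]
where $\nu$ is the outward unit normal and the constant $c = \operatorname{Vol}(\Omega)/\operatorname{Area}(\Sigma)$ is forced by the compatibility identity $\int_\Omega \Delta f\,dV = \int_\Sigma \partial_\nu f\,dA$; standard elliptic theory then provides a solution, unique up to an additive constant.

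Next I would invoke the Reilly formula (\cite{Reilly2,Reilly3}). Writing $u = f|_\Sigma$ and $u_\nu = \partial f/\partial\nu$, it reads
\[
\int_\Omega \big[(\Delta f)^2 - |\nabla^2 f|^2\big]\,dV = \int_\Omega \mathrm{Ric}(\nabla f,\nabla f)\,dV + \int_\Sigma \big[ H u_\nu^2 + 2u_\nu \Delta_\Sigma u + \mathrm{II}(\nabla_\Sigma u,\nabla_\Sigma u)\big]\,dA .
\]
The whole point of the Neumann choice is that every term now has a clean sign or value. Because $u_\nu \equiv c$ is constant, $\int_\Sigma 2u_\nu \Delta_\Sigma u\,dA = 2c\int_\Sigma \Delta_\Sigma u\,dA = 0$ by the divergence theorem on the closed manifold $\Sigma$; convexity gives $\mathrm{II}\ge 0$, hence $H\ge 0$ and $\mathrm{II}(\nabla_\Sigma u,\nabla_\Sigma u)\ge 0$; and $\mathrm{Ric}\ge 0$ kills the first term on the right. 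On the left, $(\Delta f)^2 \equiv 1$, while the algebraic trace inequality $|\nabla^2 f|^2 \ge (\Delta f)^2/n = 1/n$ controls the Hessian term. Combining these gives
\[
\frac{n-1}{n}\operatorname{Vol}(\Omega) \ge \int_\Omega\big[(\Delta f)^2 - |\nabla^2 f|^2\big]\,dV \ge c^2 \int_\Sigma H\,dA .
\]
Substituting $c = \operatorname{Vol}(\Omega)/\operatorname{Area}(\Sigma)$ and rearranging produces \eqref{Reilly} directly.

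For the rigidity statement, equality throughout forces $|\nabla^2 f|^2 = (\Delta f)^2/n$ pointwise, i.e. $\nabla^2 f = \tfrac1n g$. The hard part, and the step I expect to require the most care, is extracting geometry from this Hessian equation: a nonconstant solution of $\nabla^2 f = \tfrac1n g$ can live only on a flat manifold whose level sets of $f$ are concentric geodesic spheres (an Obata/Tashiro-type rigidity; note that this equation already forces $\mathrm{Ric}(\nabla f,\cdot)\equiv 0$, so the Ricci term contributes no extra constraint). Together with $u_\nu$ constant on $\Sigma$, this should pin $\Sigma$ down as a round sphere and $\Omega$ as a Euclidean ball, with the explicit model $f = \tfrac{1}{2n}|x|^2$ up to a constant; conversely the ball saturates all the inequalities above. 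To make this rigorous I would verify that $\nabla f$ does not vanish in the interior, so the level-set foliation is well defined, and I would extract boundary umbilicity from the tangential decomposition $\nabla^2 f(X,Y) = \nabla^2_\Sigma u(X,Y) + u_\nu\,\mathrm{II}(X,Y)$ for $X,Y$ tangent to $\Sigma$.
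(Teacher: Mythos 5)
Your proposal is correct and follows essentially the same route as the paper: the Neumann problem $\Delta f=1$ in $\Omega$, $f_{\nu}=c=\operatorname{Vol}(\Omega)/\operatorname{Area}(\Sigma)$ on $\Sigma$ is exactly the specialization of the paper's equation \eqref{Re} to $V\equiv 1$, $K=0$, and feeding its solution into the Reilly formula and discarding the nonnegative Ricci, second-fundamental-form, and (vanishing) tangential terms is precisely Reilly's argument that the paper recapitulates in weighted form. One small correction to your rigidity sketch: since $\bar\nabla^{2}f=\tfrac1n g$ makes $f$ strictly convex and $f_{\nu}=c>0$ forces its minimum to be interior, $\nabla f$ necessarily \emph{does} vanish at exactly one interior point, which serves as the center of the polar foliation in the Obata--Tashiro argument (and one then shows $f|_{\Sigma}$ is constant, e.g.\ via $|\nabla f|^{2}-\tfrac2n f\equiv\mathrm{const}$ evaluated at critical points of $f|_{\Sigma}$, so that $\Sigma$ is a geodesic sphere about that center), rather than requiring $\nabla f\neq 0$ throughout the interior.
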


The above classical Minkowski inequality was further generalized by C. Xia in \cite{Xia}, who proved a weighted Minkowski type inequality in space forms and obtained an optimal Minkowski type inequality as follows:
\begin{theorem}[\cite{Xia}]\label{Xia}
Let $\Omega^{n} \subset \mathbb{H}^{n}\, (\,\mathbb{S}_{+}^{n}\,resp.)$ be a compact $n$-dimensional domain with smooth boundary $\Sigma$. Let $V(x)=\cosh r\,(\,\cos r\,resp.)$, where $r(x)=\operatorname{dist}(x, p)$ for some fixed point $p \in \mathbb{H}^{n}\,(p \in \mathbb{S}_{+}^{n}\, resp.)$. Assume $H$ is the mean curvature, and the second fundamental form $h$ of $\Sigma$ satisfies
\begin{equation}\label{h-ij}
  h_{\alpha\beta} \geq (\bar{\nabla}_{\nu} \log V) {g}_{\alpha\beta},
\end{equation}
where $\nu$ is the unit outward normal to $\Sigma$ and ${g}_{\alpha\beta}$ is the induced metric on $\Sigma$.
Then we have
\begin{equation}\label{space-form}
  \left(\int_{\Sigma} V d A\right)^{2} \geq \frac{n}{n-1} \int_{\Omega} V d \Omega \int_{\Sigma} H V d A,
\end{equation}
the equality in \eqref{space-form} holds if and only if $\Omega$ is a geodesic ball $B_{R}(q)$ for some point $q \in \mathbb{H}^{n}(q \in$ $\mathbb{S}_{+}^{n}$ resp.). In particular, \eqref{space-form} holds true when $\Sigma$ is horo-spherical convex in the case $\Omega \subset \mathbb{H}^{n}$ or $\Sigma$ is convex and $p \in \Omega$ in the case $\Omega \subset \mathbb{S}_{+}^{n}.$
\end{theorem}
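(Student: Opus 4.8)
The plan is to establish \eqref{space-form} through the generalized Reilly formula used in \cite{Xia}, exploiting that the weight $V$ is a \emph{static potential} of the ambient space form. Setting $\kappa=-1$ on $\hh^{n}$ and $\kappa=+1$ on $\ss^{n}_{+}$, one first records that $V=\cosh r$ (resp.\ $\cos r$) obeys $\on^{2}V=-\kappa V\bar g$ and $\ode V=-n\kappa V$; together with $\overline{\mathrm{Ric}}=(n-1)\kappa\bar g$ this yields the static identity
\[
(\ode V)\,\bar g-\on^{2}V+V\,\overline{\mathrm{Ric}}=0 .
\]
Since this is exactly the tensor appearing in the interior integrand of the weighted Reilly formula, for a space form that curvature term vanishes identically and the formula collapses, for every $f$, to
\[
\int_{\Omega}V\big((\Delta f)^{2}-|\nabla^{2}f|^{2}\big)\,d\Omega=\int_{\Sigma}(\text{boundary terms})\,dA .
\]

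Next I would choose an auxiliary function adapted to the conjectured extremal, a geodesic ball centred at $p$. In such a ball the natural candidate is $f=aV+b$, which satisfies both $\nabla^{2}f=\tfrac1n(\Delta f)\bar g$ and $\Delta f+n\kappa f=\mathrm{const}$; this motivates solving
\[
\Delta f+n\kappa f=\gamma \ \text{ in }\Omega,\qquad \partial_{\nu}f=c_{1} \ \text{ on }\Sigma,
\]
with constants $\gamma,c_{1}$ to be fixed. Inserting this $f$ into the collapsed identity, the pointwise Newton inequality $|\nabla^{2}f|^{2}\ge\tfrac1n(\Delta f)^{2}$, with equality precisely when $\nabla^{2}f=\tfrac1n(\Delta f)\bar g$, generates the sharp factor $\tfrac{n}{n-1}$. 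On $\Sigma$ the constancy of $\partial_{\nu}f$ cancels the mixed tangential--normal contributions and leaves $c_{1}^{2}\int_{\Sigma}(VH+V_{\nu})\,dA$ together with the purely tangential term $\int_{\Sigma}\big(Vh(\nabla_{\Sigma}z,\nabla_{\Sigma}z)-V_{\nu}|\nabla_{\Sigma}z|^{2}\big)\,dA$, where $z=f|_{\Sigma}$ and $V_{\nu}=\on_{\nu}V$. The hypothesis \eqref{h-ij} is exactly what forces this last integral to be nonnegative, so it may be dropped in the estimate---this is where the curvature condition enters.

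It then remains to turn
\[
\tfrac{n-1}{n}\int_{\Omega}V(\Delta f)^{2}\,d\Omega\ \ge\ c_{1}^{2}\int_{\Sigma}HV\,dA+c_{1}^{2}\int_{\Sigma}V_{\nu}\,dA
\]
into \eqref{space-form}. I would absorb the last term using $\int_{\Sigma}V_{\nu}\,dA=\int_{\Omega}\ode V\,d\Omega=-n\kappa\int_{\Omega}V\,d\Omega$, and then evaluate $\int_{\Omega}V(\Delta f)^{2}$ by repeatedly testing the equation $\Delta f=\gamma-n\kappa f$ against $V$, $Vf$ and using the solvability conditions, so that the auxiliary volume integrals collapse and, after a Cauchy--Schwarz step, the right side reorganizes into $\tfrac{n}{n-1}\int_{\Omega}V\,d\Omega\int_{\Sigma}HV\,dA$ while the left side becomes $\big(\int_{\Sigma}V\,dA\big)^{2}$. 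For rigidity, equality forces $\nabla^{2}f=\tfrac1n(\Delta f)\bar g$ on $\Omega$, $\nabla_{\Sigma}z\equiv 0$, and equality in \eqref{h-ij}; the umbilicity of the level sets of $f$ then identifies $\Omega$ with a geodesic ball.

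The main obstacle I anticipate is precisely this middle bookkeeping. In the Euclidean model $\Delta f$ can be taken constant, so $\int_{\Omega}V(\Delta f)^{2}$ is trivially a multiple of $\int_{\Omega}V$; here $\Delta f=\gamma-n\kappa f$ is genuinely non-constant, and the argument hinges on showing that the extra integrals $\int_{\Omega}Vf$, $\int_{\Omega}Vf^{2}$ and $\int_{\Sigma}fV_{\nu}$ conspire to telescope into the clean weighted volume/area combination. A secondary technical issue is the solvability and regularity of the Neumann problem for $\Delta+n\kappa$, in particular ruling out kernel obstructions in the spherical case.
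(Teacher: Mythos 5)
Your overall strategy (use the static identity $(\bar\Delta V)\bar g-\bar\nabla^2V+V\,\overline{\mathrm{Ric}}=0$ to kill the interior curvature term of the weighted Reilly formula \eqref{qx}, then Newton/Cauchy--Schwarz for the factor $\tfrac{n}{n-1}$, with hypothesis \eqref{h-ij} absorbing the tangential boundary term) is the same family of argument as Xia's, but the proposal has a genuine gap: the boundary condition $\partial_\nu f=c_1$ is the wrong one, and the troubles you flag as ``middle bookkeeping'' are symptoms of that choice, not removable technicalities. Every boundary term in \eqref{qx} is built from the modified quantities $u:=f_\nu-\frac{V_\nu}{V}f$ and $\nabla f-\frac{\nabla V}{V}f$, not from $f_\nu$ and $\nabla f$. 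With pure Neumann data $f_\nu=c_1$, the mixed term $-\int_\Sigma Vg\left(\nabla\left(f_\nu-\frac{V_\nu}{V}f\right),\nabla f-\frac{\nabla V}{V}f\right)dA$ does \emph{not} cancel, since $\frac{V_\nu}{V}f$ is not constant along $\Sigma$; worse, the mean-curvature term appears as $\int_\Sigma VH\left(c_1-\frac{V_\nu}{V}f\right)^2dA$, weighted by the unknown boundary values of $f$, and no manipulation of $\int_\Omega Vf$, $\int_\Omega Vf^2$, $\int_\Sigma fV_\nu$ will turn it into $c^2\int_\Sigma HV\,dA$. The correct choice --- the one recorded in \eqref{Re} --- is the oblique condition $Vf_\nu-V_\nu f=cV$ on $\Sigma$, i.e.\ $u\equiv c$: then the mixed term vanishes identically, the term $\int_\Sigma Vu\left(\Delta f-\frac{\Delta V}{V}f\right)dA=c\int_\Sigma(V\Delta f-f\Delta V)\,dA=0$ by Green's identity on the closed hypersurface $\Sigma$, the $H$-term is exactly $c^2\int_\Sigma HV\,dA$, and \eqref{h-ij} disposes of the last term. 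Solvability is not an obstacle: setting $f=wV$ converts the oblique problem into the genuine Neumann problem $\mathrm{div}(V^2\bar\nabla w)=V$ in $\Omega$, $V^2w_\nu=cV$ on $\Sigma$, which by the Fredholm alternative is solvable precisely when $c=\int_\Omega V\,d\Omega\big/\int_\Sigma V\,dA$; this is where $c$ is forced, and it also settles your worry about kernel obstructions in the spherical case, since $V=\cos r>0$ on $\mathbb{S}^n_+$.

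There is a matching slip on the interior side: the integrand in \eqref{qx} is not $(\bar\Delta f)^2-|\bar\nabla^2f|^2$ but
\begin{equation*}
\left(\bar\Delta f-\tfrac{\bar\Delta V}{V}f\right)^2-\left|\bar\nabla^2f-\tfrac{\bar\nabla^2V}{V}f\right|^2
=\left(\bar\Delta f+nKf\right)^2-\left|\bar\nabla^2f+Kf\bar g\right|^2 .
\end{equation*}
Once you work with these modified quantities, the equation $\bar\Delta f+nKf=1$ makes the first factor \emph{constant}, so Cauchy--Schwarz immediately bounds the interior integral by $\frac{n-1}{n}\int_\Omega V\,d\Omega$; the extra integrals you hoped would ``telescope'' (the ones caused by $\Delta f=\gamma-n\kappa f$ being non-constant) never arise. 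With these two corrections the argument collapses to the paper's: $\frac{n-1}{n}\int_\Omega V\,d\Omega\ge c^2\int_\Sigma HV\,dA$, which is \eqref{space-form}, and equality forces $\bar\nabla^2f+Kf\bar g=\frac1n\bar g$ together with $\left(h-\frac{V_\nu}{V}g\right)\left(\nabla f-\frac{\nabla V}{V}f,\nabla f-\frac{\nabla V}{V}f\right)=0$, from which the Obata-type rigidity argument identifies $\Omega$ as a geodesic ball.
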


Xia's proof is based on the solvability of the following elliptic linear equation in space forms $\mathbb{M}^{n}(K)$:
\begin{equation}\label{Re}
\begin{cases}{}
\bar{\Delta} f+Knf=1, & \text{in}\ \Omega,\\
Vf_{\nu}-V_{\nu}f=cV, & \text{on}\ \Sigma.\\
\end{cases}
\end{equation}
for  $c=\frac{\int_{\Omega} Vd\Omega}{\int_{\Sigma} VdA}$. He applied the following weighted Reilly formula \eqref{qx} in Theorem \ref{QLX}, which is first established by Qiu and Xia, to the solution of \eqref{Re} to derive
\begin{equation}\label{Mink}
 \frac{n-1}{n} \int_{\Omega}Vd\Omega \geq c^{2} \int_{\Sigma} HV d A,
\end{equation}
which is \eqref{space-form}.


\begin{theorem}[\cite{LX, QX2, WX}]\label{QLX}
Let $\O$ be a bounded domain in a Riemannian manifold $(\bar M^{n}, \bar g)$
with piecewise smooth boundary $\p\O$. Assume that $\p \Omega$ is decomposed into two smooth pieces $\p_1 \Omega$ and $\p_2 \Omega$ with a common boundary $\Gamma$. 
Let $V$ be a non-negative smooth function  on $\bar \O$ such that $\frac{\bar \n^2 V}{V}$ is continuous up to $\p\O$.
Then for any function $f\in C^{\infty}(\bar \O\setminus \G)$,
we have
\begin{eqnarray}\label{qx}
&&\int_\O V\left(\left(\ode f-\frac{\bar \De V}{V}f\right)^2-\left|\on^2 f-\frac{\bar \n^2 V}{V}f\right|^2\right)d\O\\
&=&\int_\O \left(\ode V\bar g- \on^2 V+V\cdot{\overline{Ric}}\right)\left(\bar \n f-\frac{\bar \n V}{V}f, \bar \n f-\frac{\bar \n V}{V}f\right)d\O\nonumber
 \\&&+\int_{\p\O} V\left(f_\nu-\frac{V_\nu}{V}f\right)\left(\De f-\frac{\De V}{V}f\right)dA - \int_{\p\O} Vg\left(\n \left(f_\nu-\frac{V_\nu}{V}f\right), \n f-\frac{\n V}{V}f\right)dA
 \nonumber\\&&+\int_{\p\O} VH\left(f_\nu-\frac{V_\nu}{V}f\right)^2 +\left(h-\frac{V_{\nu}}{V}g\right)\left(\n f-\frac{\n V}{V}f, \n f-\frac{\n V}{V}f\right)dA.\nonumber
\end{eqnarray}
Here $\bar{\nabla}$, $\bar{\Delta}$ and $\bar{\nabla}^{2}$ are the gradient, the Laplacian and the Hessian on $\bar{M}^{n}$ respectively, while ${\nabla}$, ${\Delta}$ and ${\nabla}^{2}$ are the gradient, the Laplacian and the Hessian on $\partial\Omega$ respectively. ${\overline{Ric}}$ is the Ricci $2$-tensor of $(\bar M, \bar g)$.
\end{theorem}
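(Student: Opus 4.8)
The plan is to prove \eqref{qx} by the Bochner technique, using the substitution $w=f/V$ to expose its structure. A direct computation gives the three pointwise identities
$$\bar\nabla f-\frac{\bar\nabla V}{V}f=V\bar\nabla w,\qquad \bar\Delta f-\frac{\bar\Delta V}{V}f=V\bar\Delta w+2\langle\bar\nabla V,\bar\nabla w\rangle,$$
$$\bar\nabla^2 f-\frac{\bar\nabla^2 V}{V}f=V\bar\nabla^2 w+\bar\nabla V\otimes\bar\nabla w+\bar\nabla w\otimes\bar\nabla V.$$
Writing $\omega=\bar\nabla f-\frac{\bar\nabla V}{V}f=V\bar\nabla w$, $P=\bar\Delta f-\frac{\bar\Delta V}{V}f$ and $T=\bar\nabla^2 f-\frac{\bar\nabla^2 V}{V}f$ (so that $P=\mathrm{tr}\,T$), these substitutions turn every term in \eqref{qx} into a polynomial expression in $w$, $V$ and their covariant derivatives; in particular the boundary quantities become $\langle\omega,\nu\rangle=Vw_\nu$ and $\omega^{\top}=V\nabla w$, which is exactly what will produce the clean boundary integrands.

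First I would establish the interior identity
$$V\big(P^2-|T|^2\big)=\big(\bar\Delta V\,\bar g-\bar\nabla^2 V+V\,\overline{Ric}\big)(\omega,\omega)+\mathrm{div}\,Z$$
for an explicit vector field $Z$ built from $P$, $\omega$, $V$ and the covariant derivative of $\bar\nabla w$; when $V\equiv1$ this reduces to the familiar $\mathrm{div}\big((\bar\Delta f)\bar\nabla f-\bar\nabla_{\bar\nabla f}\bar\nabla f\big)=(\bar\Delta f)^2-|\bar\nabla^2 f|^2-\overline{Ric}(\bar\nabla f,\bar\nabla f)$. The derivation is a Bochner computation: after expanding $P^2-|T|^2$ in terms of $w$ and integrating the third-order derivatives of $w$ by parts, one commutes covariant derivatives via the Ricci identity, and this is precisely the step that converts the leftover third-order term into the curvature contribution $V\,\overline{Ric}(\omega,\omega)$, while the first and second derivatives of $V$ assemble into the remaining $\big(\bar\Delta V\,\bar g-\bar\nabla^2 V\big)(\omega,\omega)$ term. (As a check, in a space form with $V=\cosh r$ or $\cos r$ one has $\bar\nabla^2 V=-KV\bar g$ and $\overline{Ric}=(n-1)K\bar g$, so this tensor vanishes identically, which is exactly the mechanism behind Theorem \ref{Xia}.)

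Next I would integrate the interior identity over $\Omega$ and apply the divergence theorem to the $\mathrm{div}\,Z$ term, turning it into $\int_{\partial\Omega}\langle Z,\nu\rangle\,dA$. Since $f\in C^\infty(\bar\Omega\setminus\Gamma)$ and $\bar\nabla^2 V/V$ is continuous up to $\partial\Omega$, the field $Z$ is integrable and the divergence theorem may be applied separately on $\partial_1\Omega$ and $\partial_2\Omega$; the common boundary $\Gamma$ has codimension two in $\bar\Omega$ and measure zero in $\partial\Omega$, so it contributes nothing and $\int_{\partial\Omega}=\int_{\partial_1\Omega}+\int_{\partial_2\Omega}$. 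The final and most delicate step is to decompose $\langle Z,\nu\rangle$ into tangential and normal parts using the orthogonal splitting $\bar\nabla w=\nabla w+w_\nu\nu$ together with the Gauss–Weingarten relations $\bar\Delta w=\Delta w+Hw_\nu+\bar\nabla^2 w(\nu,\nu)$ and $\bar\nabla^2 w(\nu,X)=\nabla_X w_\nu-h(X,\nabla w)$ for $X$ tangent to $\partial\Omega$. Collecting terms, the normal–normal Hessian is reabsorbed, the mean curvature produces the $VH\big(f_\nu-\frac{V_\nu}{V}f\big)^2$ term, the second fundamental form appears precisely in the shifted combination $h-\frac{V_\nu}{V}g$ acting on $\omega^{\top}$, and the mixed terms give the boundary-Laplacian and tangential-gradient integrals; re-expressing $w$ in terms of $f$ then recovers \eqref{qx}. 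I expect this boundary decomposition to be the main obstacle, since one must carefully isolate the normal second derivative from the tangential and mixed contributions and track every factor of $V$ and $V_\nu/V$ so that the second fundamental form emerges exactly in the combination $h-\frac{V_\nu}{V}g$.
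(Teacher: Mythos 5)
A preliminary remark: the paper itself contains no proof of Theorem \ref{QLX}. It is imported as a known result from \cite{LX,QX2,WX}, so there is no internal argument to compare yours against; the only meaningful comparison is with the proofs in those references, and your proposal follows essentially the same route they do. The substitution $w=f/V$, your three pointwise identities (all correct), the Bochner-type step expressing $V\big(\big(\bar\Delta f-\tfrac{\bar\Delta V}{V}f\big)^2-\big|\bar\nabla^2 f-\tfrac{\bar\nabla^2 V}{V}f\big|^2\big)$ as the sub-static tensor $\bar\Delta V\bar g-\bar\nabla^2 V+V\overline{Ric}$ evaluated on $\omega=V\bar\nabla w$ plus a divergence, and the Gauss--Weingarten bookkeeping on the boundary are precisely the Qiu--Xia argument, extended to piecewise smooth boundaries as in Wang--Xia. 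Your two boundary identities are correct, the normal-normal Hessian term does cancel in $\langle Z,\nu\rangle$ exactly as you predict, and the space-form sanity check ($\bar\nabla^2V=-KV\bar g$, $\overline{Ric}=(n-1)K\bar g$, so the interior tensor vanishes) is also right.

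Two caveats. The lesser one: as written this is a correct plan rather than a complete proof, since the two central computations (the explicit vector field $Z$ and the full boundary expansion with all factors of $V$ and $V_\nu/V$ tracked) are described but not carried out; both are routine and do close up. The more substantive one: your claim that $f\in C^\infty(\bar\Omega\setminus\Gamma)$ together with continuity of $\bar\nabla^2V/V$ makes $Z$ integrable is not a valid inference. The field $Z$ involves second derivatives of $f$, and smoothness away from the codimension-two edge $\Gamma$ gives no control of these near $\Gamma$; to apply the divergence theorem one needs a cutoff/approximation argument near $\Gamma$ together with the additional information (available for the mixed boundary value problems to which the formula is applied, e.g. \eqref{Fredholm}) that the solution lies in $C^{1,\alpha}(\bar\Omega)$ with Hessian in $L^2(\Omega)$. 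This looseness is inherited from the statement of the theorem itself, which imposes no integrability hypothesis, but in a self-contained proof the limiting argument near $\Gamma$ must be supplied rather than asserted.
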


The study of free boundary surfaces or hypersurface has attracted many attention in recent decades. Inspiring results are contained in a series of papers of Fraser and Schoen \cite{FS1,FS2,FS3} about minimal hypersurfaces with free boundary in a ball $B$ and the first Steklov eigenvalue. Here  ``free boundary'' means the hypersurface intersects $\partial B$ orthogonally. There have been plenty of works about existence \cite{Jost,ST,MLi1}, regularity \cite{GHN1, MLi2}, stability  \cite{WX, Ros2, HZL2} of free boundary constant mean curvature (or minimal) hypersurfaces in a ball. We refer to a nice survey paper \cite{MLi3, WXsurvey} for more details. In this paper we are interested in obtaining new inequalities for weighted Minkowski inequalities with free boundary. We will use Theorem \ref{QLX} to generalize Theorem \ref{Xia} into the setting of domains with partial umbilical free boundary in space forms.

\section{Preliminaries}

First of all, we remark about our notations. Let $\mathbb{M}^{n}(K)$ be a complete simply-connected Riemannian manifold with constant sectional curvature $K$. Up to homotheties we may assume $K=0, 1, -1$; the case $K=0$ corresponds to the case of the Euclidean space $\mathbb{R}^{n}$, $K=1$ is the unit sphere $\mathbb{S}^{n}$ with the round metric and $K=-1$ is the hyperbolic space $\mathbb{H}^{n}$.

It is well-known that  an umbilical hypersuface in space forms has constant principal curvature. We use $S_{K, \kappa}$ to denote an umbilical hypersurface in $\mathbb{M}^{n}(K)$ with principal curvature $\kappa\in\mathbb{R}$. By a choice of orientation (or normal vector field $\bar N$), we may assume $\kappa\in [0, \infty)$.

It is also a well-known fact that in $\mathbb{R}^{n}$ and $\ss^n$,  geodesic spheres $(\kappa>0)$ and totally geodesic hyperplanes $(\kappa=0)$ are all complete umbilical hypersurfaces, while in  $\mathbb{H}^{n}$ the family of all complete umbilical hypersurfaces includes geodesic spheres $(\kappa>1)$, equidistant hypersurfaces $(0<\kappa<1)$, horospheres $(\kappa=1)$ and totally geodesic hyperplanes $(\kappa=0)$ (see e.g \cite{RAF}). Unlike geodesic spheres. Equidistant hypersurfaces, horospheres and totally geodesic hyperplanes, which are called {\it support hypersurfaces}, are all non-compact umbilical hypersurfaces in $\mathbb{H}^{n}$.

Since $S_{K, \kappa}$ divides  $\mathbb{M}^{n}(K)$ into two connected components. We use $B^{{\rm int}}_{K, \kappa}$ to denote the component whose outward normal is given by the orientation $\bar N$. For the other one, we denote by $B^{{\rm ext}}_{K, \kappa}$.

Now we state our main results in this paper as follows
\begin{theorem}\label{Guo}
Let $\O\subset B^{{\rm int}}_{K, \kappa}$ be a bounded, connected open domain whose boundary $\p\O=\bar{\Sigma}\cup T$, where $\S\subset B^{\rm int}_{K, \kappa}$ is a smooth compact hypersurface and $T\subset S_{K,\kappa}$ meets $\Sigma$ orthogonally at a common $(n-2)$-dimensional submanifold $\Gamma$. Suppose $\S$ has mean curvature $H$ and the second fundamental form $h$ satisfies
\begin{equation}\label{h-ij-2}
  h_{\alpha\beta} \geq (\bar{\nabla}_{\nu} \log V) {g}_{\alpha\beta},
\end{equation}
where $V$ is given by \eqref{E-V},\eqref{H-V} and \eqref{S-V} below.
\begin{itemize}
  \item [(i)]If $S_{K,\kappa}$ is an Euclidean plane $(\kappa=0)$ or a support hypersurface $(0\leq\kappa\leq1)$ in $\mathbb{H}^{n}$. Then
  \begin{equation}\label{space-form-2}
 \left(\int_{\Sigma} V d A\right)^{2} \geq \frac{n}{n-1} \int_{\Omega} V d \Omega \int_{\Sigma} H V d A.
\end{equation}
  \item [(ii)]If $S_{K,\kappa}$ is a geodesic sphere in $\mathbb{M}^{n}(K)$ or a totally geodesic hyperplane $(\kappa=0)$ in $\mathbb{S}^{n}$. Then the same conclusion in (i) holds provided $\Omega\subset B^{{\rm int},+}_{K, \kappa}$.
\end{itemize}
Moreover, the above equality \eqref{space-form-2} holds if and only if $\S$ is a part of an umbilical hypersurface.
\end{theorem}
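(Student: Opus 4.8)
The plan is to follow Xia's scheme driving Theorem \ref{Xia}, but to replace the single boundary condition of \eqref{Re} by a \emph{mixed} problem adapted to the corner $\Gamma$. The weight $V$ fixed by \eqref{E-V}, \eqref{H-V}, \eqref{S-V} is a static potential, i.e. $\bar{\nabla}^2 V = -K V \bar{g}$; hence $\bar{\Delta} V = -nKV$, and since $\overline{Ric} = (n-1)K\bar{g}$ on $\mathbb{M}^n(K)$, the bulk curvature integrand $\bar{\Delta}V\,\bar{g} - \bar{\nabla}^2 V + V\,\overline{Ric}$ in \eqref{qx} vanishes identically. I would therefore solve
\begin{equation*}
\begin{cases}
\bar{\Delta} f + Knf = 1, & \text{in } \Omega,\\
f_\nu - \tfrac{V_\nu}{V} f = c, & \text{on } \Sigma,\\
f_\nu - \tfrac{V_\nu}{V} f = 0, & \text{on } T,
\end{cases}
\end{equation*}
that is, the condition of \eqref{Re} on the free part $\Sigma$ and its homogeneous version on the supported part $T$. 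Multiplying the interior equation by $V$ and integrating by parts, the contribution of $T$ drops out because of the homogeneous condition, forcing $c = \int_\Omega V\,d\Omega / \int_\Sigma V\,dA$, exactly the constant of \eqref{Re}.

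Granting solvability and regularity (discussed below), I insert $f$ into the weighted Reilly formula \eqref{qx}. Writing $T_{ij} = \bar{\nabla}^2_{ij} f + K f\,\bar{g}_{ij}$, the interior equation gives $\operatorname{tr} T = \bar{\Delta} f + nKf = 1$, so Cauchy--Schwarz yields $|T|^2 \geq (\operatorname{tr} T)^2/n = 1/n$, whence the left-hand side of \eqref{qx} satisfies
\begin{equation*}
\int_\Omega V\bigl(1 - |T|^2\bigr)\,d\Omega \;\leq\; \frac{n-1}{n}\int_\Omega V\,d\Omega .
\end{equation*}
On the right-hand side the bulk term is zero, and I split $\int_{\partial\Omega} = \int_\Sigma + \int_T$. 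Setting $w := \nabla f - \tfrac{\nabla V}{V} f$, the homogeneous condition on $T$ annihilates every boundary term there except $\int_T\bigl(h - \tfrac{V_\nu}{V} g\bigr)(w,w)\,dA$, while on $\Sigma$ the constancy of $f_\nu - \tfrac{V_\nu}{V} f = c$ kills the gradient term, leaving the cross term $c\int_\Sigma V(\Delta f - \tfrac{\Delta V}{V} f)\,dA$ together with $c^2\int_\Sigma HV\,dA + \int_\Sigma\bigl(h - \tfrac{V_\nu}{V} g\bigr)(w,w)\,dA$.

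The decisive free-boundary computation is the cross term. Green's formula on $\Sigma$, whose boundary is now $\Gamma$, gives
\begin{equation*}
\int_\Sigma V\Bigl(\Delta f - \tfrac{\Delta V}{V} f\Bigr) dA = \int_\Sigma \operatorname{div}_\Sigma\bigl(V\nabla f - f\nabla V\bigr) dA = \int_\Gamma \bigl(V f_\mu - f V_\mu\bigr)\,ds,
\end{equation*}
where $\mu$ is the outward conormal of $\Gamma$ in $\Sigma$. Since $\Sigma$ meets $S_{K,\kappa}$ orthogonally along $\Gamma$, this conormal coincides with the outward normal $\nu$ of $T$, so along $\Gamma$ the quantities $f_\mu, V_\mu$ equal the normal derivatives $f_\nu, V_\nu$ taken on $T$; the homogeneous condition $f_\nu = \tfrac{V_\nu}{V}f$ on $T$ then gives $V f_\mu - f V_\mu = f V_\mu - f V_\mu = 0$ pointwise on $\Gamma$. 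Thus the cross term vanishes, and the right-hand side of \eqref{qx} reduces to
\begin{equation*}
c^2\int_\Sigma HV\,dA + \int_\Sigma\Bigl(h - \tfrac{V_\nu}{V} g\Bigr)(w,w)\,dA + \int_T\Bigl(h - \tfrac{V_\nu}{V} g\Bigr)(w,w)\,dA .
\end{equation*}
The $\Sigma$-integral is nonnegative by \eqref{h-ij-2}. For the $T$-integral I use that $T\subset S_{K,\kappa}$ is umbilical with $h = \kappa g$, so it equals $\int_T(\kappa - \tfrac{V_\nu}{V})|w|^2\,dA$; a direct evaluation of $V_\nu/V$ on each admissible support hypersurface shows $\kappa \geq V_\nu/V$, and it is precisely this sign that produces the case distinction (i)/(ii) and the restriction $\Omega\subset B^{{\rm int},+}_{K,\kappa}$. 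Combining the displays gives $\tfrac{n-1}{n}\int_\Omega V\,d\Omega \geq c^2\int_\Sigma HV\,dA$, and substituting $c$ yields \eqref{space-form-2}.

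The main obstacle is twofold. Analytically, one must establish solvability and enough regularity of the mixed problem on the nonsmooth domain $\Omega$; because $\Sigma$ and $T$ meet orthogonally, the edge $\Gamma$ is of the type for which elliptic theory furnishes the regularity needed to justify \eqref{qx} and the integration by parts above, but this requires care. Geometrically, the heart of the matter is the vanishing of the $\Gamma$-integral, which is exactly what orthogonal intersection together with the homogeneous condition on $T$ supply, and which is what permits the passage from the closed case of Theorem \ref{Xia} to the free-boundary case. For the equality statement I would run the inequalities backwards: equality in Cauchy--Schwarz forces $\bar{\nabla}^2 f + Kf\,\bar{g} = \tfrac1n\bar{g}$, and equality in the boundary estimate forces $(h - \tfrac{V_\nu}{V} g)(w,w) = 0$ on $\Sigma$; a standard argument then identifies $\Sigma$ as part of an umbilical hypersurface.
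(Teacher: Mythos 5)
Your derivation of the inequality itself is essentially the paper's proof: your mixed problem is exactly \eqref{Fredholm} (by \eqref{va3} one has $V_{\bar N}=\kappa V$ on $S_{K,\kappa}$, so the paper's condition $f_{\bar N}-\kappa f=0$ on $T$ is precisely your homogeneous condition), and the application of \eqref{qx}, the vanishing of the cross term over $\Gamma$ via $\mu=\bar N$, and the use of \eqref{h-ij-2} on $\Sigma$ all coincide with the paper. But your treatment of the term over $T$ contains a real error. By \eqref{va3} the quantity $\kappa-\frac{V_{\bar N}}{V}$ is \emph{identically zero} on $S_{K,\kappa}$, so $h-\frac{V_{\bar N}}{V}g\equiv 0$ on $T$; it is not an inequality whose sign depends on the case, and it has nothing to do with the distinction (i)/(ii). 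The true role of that distinction and of the restriction $\Omega\subset B^{{\rm int},+}_{K,\kappa}$ is to guarantee $V>0$ on $\bar\Omega$ (see the Remark after Theorem \ref{Guo} and \eqref{E-V}, \eqref{H-V}, \eqref{S-V}). Positivity of $V$ is exactly what your own argument uses silently: when you multiply the pointwise Cauchy--Schwarz bound $1-|\bar\nabla^2f+Kf\bar g|^2\le\frac{n-1}{n}$ by $V$ and integrate; when you invoke $\bar\nabla_\nu\log V$ in \eqref{h-ij-2}; when you solve the mixed problem (the paper does this by substituting $f=wV$, with $w$ solving the uniformly elliptic Neumann problem \eqref{Fredholm2}, which needs $V$ bounded away from zero on $\bar\Omega$); and to have $\int_\Sigma V\,dA>0$ so that $c$ is well defined. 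As written, your proof never verifies $V>0$ and would formally apply without the half-ball restriction, where the statement is false.

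The second genuine gap is the rigidity statement, which you dismiss as ``a standard argument.'' Equality only yields the Obata-type system \eqref{Obata}: $\bar\nabla^2 f+Kf\bar g=\frac1n\bar g$ in $\Omega$ together with $\left(h-\frac{V_\nu}{V}g\right)\left(\nabla f-\frac{\nabla V}{V}f,\nabla f-\frac{\nabla V}{V}f\right)=0$ on $\Sigma$. Since \eqref{h-ij-2} is non-strict, the form $h-\frac{V_\nu}{V}g$ may be degenerate, and its vanishing on the vector $\nabla f-\frac{\nabla V}{V}f$ carries no pointwise information by itself. The bulk of the paper's proof is devoted to overcoming precisely this: a case-by-case touching argument (sphere and hyperplane in $\mathbb{R}^n$; geodesic sphere, equidistant hypersurface, horosphere and totally geodesic hyperplane in $\mathbb{H}^n$; geodesic sphere and totally geodesic $\mathbb{S}^{n-1}$ in $\mathbb{S}^n$) produces an interior point of $\Sigma$ where $h-\frac{V_\nu}{V}g>0$ strictly, so the set $\mathcal S=\{x\in\Sigma:\ (h-\frac{V_\nu}{V}g)(x)>0\}$ is non-empty and open; on $\mathcal S$ one concludes $f=\alpha V$, and restricting the Obata equation to $\mathcal S$ gives $h_{ij}\tilde f_\nu=\frac1n\bar g_{ij}$, hence umbilicity with constant principal curvature, so $\mathcal S$ is also closed and connectedness gives $\Sigma=\mathcal S$ umbilical. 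This touching argument is, again, exactly where the geometric hypotheses of (i)/(ii) re-enter. Without supplying it, your proposal establishes the inequality \eqref{space-form-2} (modulo the positivity issue above) but not the characterization of equality.
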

\begin{remark}If $S_{K,\kappa}$ is an Euclidean plane or a support hypersurface in $\mathbb{H}^{n}$, then $V>0$, see \eqref{E-V} and \eqref{H-V} below.

If $S_{K,\kappa}$ is a geodesic sphere in $\mathbb{M}^{n}(K)$ or a totally geodesic hyperplane in $\mathbb{S}^{n}$, then $V>0$ when $\Omega\subseteq B^{{\rm int},+}_{K, \kappa}$. Here $B^{{\rm int},+}_{K, \kappa}$ means a half ball, see \eqref{half-ball-E},\eqref{half-ball},\eqref{half-ball-sphere},\eqref{half-plane} below.
\end{remark}

\begin{corollary}\label{cor-7}
Let $\O\subset B^{{\rm int}}_{K, \kappa}$ and $V$ be as above in Theorem \ref{Guo}. If $\Sigma$ is convex (horo-spherical convex resp.) supported by totally geodesic hyperplane (support hypersurfaces resp.) in $\mathbb{R}^{n}$ or $\mathbb{S}^{n}$ $(\,\mathbb{H}^{n}\, resp.)$. Then \eqref{h-ij-2}, and hence \eqref{space-form-2}, holds true.
\end{corollary}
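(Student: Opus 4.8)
The plan is to read the corollary as a sufficient-condition statement: its entire content is that the stated convexity hypotheses force the curvature inequality \eqref{h-ij-2}, whereupon \eqref{space-form-2} follows at once by quoting Theorem \ref{Guo}. Thus I would reduce everything to a pointwise comparison on $\Sigma$. Since $\bar g$ restricts to the induced metric $g$ on vectors tangent to $\Sigma$, inequality \eqref{h-ij-2} is equivalent to $h_{ij}\ge(\bar\nabla_\nu\log V)\,g_{ij}$, i.e. to the statement that every principal curvature of $\Sigma$ is at least $\bar\nabla_\nu\log V=V_\nu/V$. Writing $\kappa_\ast$ for the threshold of the relevant convexity notion ($\kappa_\ast=0$ for a convex $\Sigma$ supported by a totally geodesic hyperplane in $\mathbb{R}^n$ or $\mathbb{S}^n$, and $\kappa_\ast=1$ for a horo-spherically convex $\Sigma$ supported by a support hypersurface in $\mathbb{H}^n$), the convexity hypothesis is exactly $h_{ij}\ge\kappa_\ast g_{ij}$. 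Hence it suffices to prove the scalar estimate $V_\nu/V\le\kappa_\ast$ everywhere on $\Sigma$; the two inequalities then combine to give \eqref{h-ij-2}.

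To prove the scalar estimate I would use that each weight $V$ in \eqref{E-V}, \eqref{H-V}, \eqref{S-V} is a concircular potential, $\bar\nabla^2 V=-KV\bar g$. Contracting with $\bar\nabla V$ gives $\tfrac12\bar\nabla|\bar\nabla V|^2=-KV\bar\nabla V$, so that $|\bar\nabla V|^2+KV^2$ is constant on $\mathbb{M}^n(K)$, and therefore $|\bar\nabla\log V|^2=c_0/V^2-K$ for the constant $c_0=|\bar\nabla V|^2+KV^2$. The Remark guarantees $V>0$ on $\Omega$ (using $\Omega\subset B^{\mathrm{int}}_{K,\kappa}$, and the half-ball restriction $\Omega\subset B^{\mathrm{int},+}_{K,\kappa}$ in case (ii)), so the sign of $c_0$ governs the estimate. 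In the cases where $c_0\le0$ (the Euclidean hyperplane support, and the plane and horosphere supports $\kappa=0,1$ in $\mathbb{H}^n$) one has the clean gradient bound $|\bar\nabla\log V|\le\kappa_\ast$, whence $V_\nu/V\le|\bar\nabla\log V|\le\kappa_\ast$ with no further input, exactly as in the horo-spherically convex part of Theorem \ref{Xia}.

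The main obstacle is the remaining range, where $c_0>0$ and the naive gradient bound fails: the totally geodesic hyperplane support in $\mathbb{S}^n$ (where $|\bar\nabla\log V|$ blows up toward the equator) and the equidistant supports $0<\kappa<1$ in $\mathbb{H}^n$. Here $|\bar\nabla\log V|>\kappa_\ast$, so one cannot bound $V_\nu/V$ by the full gradient length and must instead control the sign and size of the normal component $V_\nu=\langle\bar\nabla V,\nu\rangle$ directly. I would do this by exploiting that $\nu$ is the outward normal of $\Sigma$ while $\Omega$ is confined to $B^{\mathrm{int}}_{K,\kappa}$ (a half-ball in case (ii)), together with the free-boundary orthogonality of $\Sigma$ and $S_{K,\kappa}$ along $\Gamma$: this positioning is what rules out the unfavorable orientation of $\bar\nabla V$ relative to $\nu$ and yields $\langle\bar\nabla V,\nu\rangle\le\kappa_\ast V$. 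Once this delicate sign estimate is in place in every case, \eqref{h-ij-2} holds and Theorem \ref{Guo} delivers \eqref{space-form-2}, completing the corollary.
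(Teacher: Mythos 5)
The paper itself never proves Corollary \ref{cor-7} --- it is stated after Theorem \ref{Guo} and not revisited in Section 4 --- so your argument has to stand on its own. Your overall reduction is surely the intended one: verify \eqref{h-ij-2} pointwise by comparing the relevant convexity threshold $\kappa_*$ with $V_\nu/V$, then quote Theorem \ref{Guo}. Your ``clean'' cases are handled correctly, but your case split misreads the paper's weights. By \eqref{H-V}, the weight attached to \emph{every} support hypersurface in $\mathbb{H}^n$, $0\le\kappa\le 1$ --- totally geodesic, equidistant and horospherical alike --- is the same function $V=1/x_n$ in the half-space model, for which $|\bar\nabla V|_{\bar g}\equiv V$; in your notation $c_0=0$ and $|\bar\nabla\log V|\equiv 1$. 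So the equidistant range $0<\kappa<1$ is not in your ``hard'' list at all: it is settled by the same one-line estimate $V_\nu/V\le|\bar\nabla\log V|=1=\kappa_*$ as the horosphere case (and the Euclidean hyperplane case is trivial since $V\equiv 1$ by \eqref{E-V}). The only genuinely delicate case is $\mathbb{S}^n$ supported by a totally geodesic hyperplane, where $V=\frac{1-|x|^2}{1+|x|^2}=\cos r$, $r=\mathrm{dist}(\cdot,p)$, and the pole $p$ lies on $S_{K,0}$ itself.

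In that case your proposal has a genuine gap: the estimate you set out to prove, $V_\nu\le\kappa_*V=0$ on $\Sigma$, is false under the hypotheses of the corollary, so no refinement of the ``positioning'' argument can complete the proof as designed. Take $q\in S_{K,0}$ with $0<\rho<d:=\mathrm{dist}(p,q)$ and $d+\rho<\pi/2$, and let $\Omega$ be the geodesic half-ball of radius $\rho$ about $q$ lying in $B^{{\rm int},+}_{K,0}$: then $\Sigma$ is umbilical with $h=\cot\rho\, g>0$ and meets $S_{K,0}$ orthogonally, so every assumption holds; yet at interior points of $\Sigma$ near its closest point to $p$ one has $\nu\approx-\partial_r$, hence $V_\nu/V\approx\tan(d-\rho)>0$. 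Inequality \eqref{h-ij-2} does hold for this $\Sigma$, but only because $\cot\rho\ge\tan(d-\rho)$, which is precisely the half-ball restriction $d+\rho\le\pi/2$ at work: the conclusion is rescued by the \emph{size} of $h$ weighed against the position of the point, not by $h\ge 0$ together with a sign of $V_\nu$. Your proposal contains no such quantitative mechanism; the spherical case is asserted (``this positioning \dots yields $\langle\bar\nabla V,\nu\rangle\le\kappa_*V$''), not proved, and it is exactly the one case that needs an idea. Indeed the difficulty looks essential rather than technical: pulling back a thin Euclidean ellipse (centered on the image of $S_{K,0}$ far from the image of $p$, long axis orthogonal to it) under the gnomonic projection gives a convex free boundary hypersurface in $B^{{\rm int},+}_{K,0}$ for which \eqref{h-ij-2} itself fails near $\Gamma$ --- the free boundary analogue of the reason Theorem \ref{Xia} requires $p\in\Omega$ in the spherical case --- so the spherical part of the statement appears to need an additional hypothesis (e.g. star-shapedness with respect to $p$) rather than a cleverer proof.
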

 Next we are in a position to state the following weighted Alexandrov-Fenchel inequality with free boundary supported on an umbilical hypersurface in $\mathbb{M}^{n}(K)$, which recovers \cite[Theorem 1.9]{LX} if $\partial\Sigma$ is empty and \cite[Theorem 1.1]{CHL} for $k=1$ if $S_{K,\kappa}$ is a geodesic sphere.
\begin{theorem}\label{AF-free}
Let $\O\subset B^{{\rm int}}_{K, \kappa}$ and $V$ be as in Theorem \ref{Guo}.  Assume $\Sigma$ is sub-static, that is,
\begin{equation}\label{sub-sta}
  \Delta V g-\nabla^{2}V+V\cdot Ric\geq0.
\end{equation}
Here $\Delta$, $\nabla^{2}$ and $Ric$ are the Laplacian, Hessian and Ricci curvature on $\Sigma$ respectively.
\begin{itemize}
  \item [(i)]If $S_{K,\kappa}$ is an Euclidean plane $(\kappa=0)$ or a support hypersurface $(0\leq\kappa\leq1)$ in $\mathbb{H}^{n}$. Then
\begin{equation}\label{AF-2}
  \left(\int_{\Sigma}HVdA\right)^{2}\geq\frac{2(n-1)}{n-2}\int_{\Sigma}VdA\int_{\Sigma}\sigma_{2}(h)VdA,
\end{equation}
where $\sigma_{2}(h)=\frac{1}{2}(H^{2}-|h|^{2})$ is the second mean curvature of $\Sigma$. 
\item [(ii)]If $S_{K,\kappa}$ is a geodesic sphere in $\mathbb{M}^{n}(K)$ or a totally geodesic hyperplane $(\kappa=0)$ in $\mathbb{S}^{n}$. Then the same conclusion in (i) holds provided $\Omega\subset B^{{\rm int},+}_{K, \kappa}$.
\end{itemize}
Moreover, if there exists one point in $\Sigma$ such that \eqref{sub-sta} strictly holds, then equality \eqref{AF-2} holds if and only if $\Sigma$ is a part of an umbilical hypersurface.
\end{theorem}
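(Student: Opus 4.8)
The plan is to re-run, one dimension lower, the weighted Reilly argument that proves Theorem \ref{Xia} and Theorem \ref{Guo}: instead of applying Theorem \ref{QLX} to the $n$-dimensional domain $\Omega$ with ambient a space form, I would apply it with the hypersurface $\Sigma^{n-1}$ itself playing the role of the ambient manifold, regarded as a compact Riemannian manifold with boundary $\Gamma=\partial\Sigma$, carrying the induced metric $g$ and the restriction of $V$. The decisive observation is that the ``curvature term'' in \eqref{qx} for this choice of ambient is exactly $\Delta V g-\nabla^2 V+V\cdot Ric$, so the sub-static hypothesis \eqref{sub-sta} is precisely the statement that this term is nonnegative. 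Since $\Sigma$ is not a space form, this term no longer vanishes identically (as it did in the proofs of Theorems \ref{Xia} and \ref{Guo}), and it is this retained bulk term that will supply the second-order curvature quantity $\sigma_2(h)$.

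Concretely, I would solve on $\Sigma$ the mixed boundary value problem $\Delta f-\frac{\Delta V}{V}f=H-\overline H$ in the interior, with a Robin condition $f_\mu-\frac{V_\mu}{V}f=\mathrm{const}$ on $\Gamma$, where $\mu$ is the outward conormal of $\Gamma$ in $\Sigma$ and $\overline H=\frac{\int_\Sigma HV\,dA}{\int_\Sigma V\,dA}$; the substitution $u=f/V$ turns the operator into $\frac1V\operatorname{div}(V^2\nabla u)$, so in the closed case the compatibility condition is just $\int_\Sigma V(H-\overline H)\,dA=0$, which is what fixes $\overline H$ (in the free-boundary case the Robin constant absorbs the corresponding $\Gamma$-flux). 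Feeding $f$ into \eqref{qx} and applying the pointwise Cauchy--Schwarz inequality $|\nabla^2 f-\frac{\nabla^2V}{V}f|^2\ge\frac1{n-1}(\Delta f-\frac{\Delta V}{V}f)^2$ on the $(n-1)$-dimensional $\Sigma$ yields the factor $\frac{n-2}{n-1}$. To expose $\sigma_2(h)$ I would invoke the Gauss equation $Ric_\Sigma=(n-2)Kg+Hh-h^2$, whose trace gives $R_\Sigma=(n-1)(n-2)K+2\sigma_2(h)$, together with the divergence identity $\operatorname{div}_\Sigma(\Delta V g-\nabla^2 V+V\cdot Ric)=V\nabla\sigma_2(h)$; an integration by parts then converts the retained sub-static term into $\int_\Sigma\sigma_2(h)V\,dA$. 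The constant $\frac{2(n-1)}{n-2}$ in \eqref{AF-2} arises exactly from the Cauchy--Schwarz factor $\frac{n-2}{n-1}$ together with the factor $\tfrac12$ in $\sigma_2=\frac12(H^2-|h|^2)$.

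The boundary contribution along $\Gamma$ is where the free-boundary hypothesis does its work. The orthogonality $\Sigma\perp T$ forces the conormal $\mu$ of $\Sigma$ to coincide with the unit normal $\bar N$ of $T\subset S_{K,\kappa}$, so the second fundamental form of $\Gamma$ inside $\Sigma$ is inherited from the umbilical support, $\hat h=\kappa g$ on $\Gamma$; and because $V$ is adapted to $S_{K,\kappa}$ it satisfies $V_\mu=\kappa V$ there. Hence $\hat h-\frac{V_\mu}{V}g\equiv0$ and the delicate tangential boundary term of \eqref{qx} drops out, just as the analogous term vanished by self-adjointness in the closed case. Using in addition the curvature condition \eqref{h-ij-2}, which reads $h-\frac{V_\nu}{V}g\ge0$, and imposing the orientation in case (i) or the half-ball restriction $\Omega\subset B^{\mathrm{int},+}_{K,\kappa}$ in case (ii) so that $V>0$ throughout, the remaining boundary terms carry the correct sign, and after the Gauss-equation and divergence manipulations above they assemble into \eqref{AF-2}.

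For the equality case I would trace back the two inequalities used: equality in Cauchy--Schwarz forces $\nabla^2 f-\frac{\nabla^2 V}{V}f=\frac1{n-1}\big(\Delta f-\frac{\Delta V}{V}f\big)g$, and if \eqref{sub-sta} is strict at one point then the sub-static term can vanish only when $T=\nabla f-\frac{\nabla V}{V}f\equiv0$; propagating these two facts should force $h$ to be a multiple of $g$, i.e. $\Sigma$ umbilical. I expect the principal obstacle to be the boundary analysis on $\Gamma$: rigorously establishing the free-boundary umbilicity $\hat h=\kappa g$ and the matching identity $V_\mu=\kappa V$, and then verifying that every residual $\Gamma$-term has the right sign simultaneously across all the geometric configurations in (i) and (ii). A secondary technical point is justifying, in the presence of the corner $\Gamma$, the integration by parts that turns the bulk sub-static term cleanly into $\int_\Sigma\sigma_2(h)V\,dA$.
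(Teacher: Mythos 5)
Your skeleton matches the paper's proof in several places: you solve $\Delta f-\frac{\Delta V}{V}f=H-\overline H^{V}$ on $\Sigma$ with the Robin condition $f_\mu-\kappa f=0$, you apply the weighted Reilly formula \eqref{qx} with $\Sigma$ itself as the ambient manifold so that the bulk curvature term is exactly $\Delta Vg-\nabla^2V+V\cdot Ric$, you kill the $\Gamma$-boundary terms using $\mu=\bar N$, $V_\mu=\kappa V$ and the umbilicity of $S_{K,\kappa}$ (indeed $\hat h-\frac{V_\mu}{V}\hat g\equiv 0$ on $\Gamma$), and you use the trace Cauchy--Schwarz inequality to get the factor $\frac{n-2}{n-1}$. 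However, the mechanism by which you produce $\sigma_2(h)$ is wrong, and this is a genuine gap rather than a presentational difference. You claim the ``retained'' sub-static bulk term of \eqref{qx} is converted into $\int_\Sigma\sigma_2(h)V\,dA$ by integration by parts via the identity $\mathrm{div}_\Sigma\bigl(\Delta Vg-\nabla^2V+V\cdot Ric\bigr)=V\nabla\sigma_2(h)$. That identity is in fact true (contracted Bianchi plus the Gauss equation), but the conversion it is supposed to perform is structurally impossible: writing $Q=\Delta Vg-\nabla^2V+V\cdot Ric$, the sub-static term equals $\int_\Sigma V^2\,Q\bigl(\nabla\tfrac fV,\nabla\tfrac fV\bigr)dA$, which is quadratic in the auxiliary solution $f$, whereas $\int_\Sigma\sigma_2 V\,dA$ does not involve $f$ at all; integrating by parts with $\mathrm{div}\,Q=V\nabla\sigma_2$ only generates terms still containing $f$ and $\nabla f$, never the $f$-free quantity you need. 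In the actual proof the sub-static term is not retained: its only role is its sign, which is used (together with the vanishing boundary terms) to discard it and obtain the $L^2$ inequality $\int_\Sigma V\bigl|\nabla^2f-\frac{\nabla^2V}{V}f\bigr|^2dA\le\int_\Sigma V\bigl(\Delta f-\frac{\Delta V}{V}f\bigr)^2dA$.

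What your proposal is missing is the bridge between $\int_\Sigma(H-\overline H^{V})^2V\,dA$ and $\int_\Sigma|\mathring h|^2V\,dA$, which is where the second fundamental form (not $Ric$) must enter. The paper first reduces \eqref{AF-2} by pure algebra (only $2\sigma_2=H^2-|h|^2$ and the trace-free decomposition, no Gauss equation) to the equivalent form \eqref{AF-thm}, and then pairs the PDE against $H-\overline H^{V}$: two integrations by parts together with the Codazzi identity $\nabla_\alpha\bigl(h_{\alpha\beta}-\tfrac{H}{n-1}g_{\alpha\beta}\bigr)=\tfrac{n-2}{n-1}\nabla_\beta H$ convert $\int_\Sigma(H-\overline H^{V})(V\Delta f-f\Delta V)\,dA$ into $\tfrac{n-1}{n-2}\int_\Sigma V\mathring h_{\alpha\beta}\mathring A_{\alpha\beta}\,dA$, after which H\"older's inequality and the Reilly step above close the argument. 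Crucially, this second integration by parts produces a boundary term $\int_{\partial\Sigma}\mathring h\,(V\nabla f-f\nabla V,\mu)\,ds$ that vanishes only because $\mu$ is a principal direction of $\Sigma$ along $\partial\Sigma$, i.e.\ $h(e,\mu)=0$ for $e\in T(\partial\Sigma)$ (Proposition 2.1 of \cite{WX}, using that $\partial\Sigma$ lies on the umbilical $S_{K,\kappa}$), combined with $V\nabla_\mu f-f\nabla_\mu V=0$; your outline never confronts this term because it never performs this step. Two smaller inaccuracies: the condition \eqref{h-ij-2} is not a hypothesis of Theorem \ref{AF-free} and is never used (the boundary terms vanish identically, so no sign condition is needed there), and the Robin constant can be taken to be $0$, the Neumann compatibility being automatic from the definition of $\overline H^{V}$.
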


\begin{corollary}\label{cor-AF-free}
Let $\O\subset B^{{\rm int}}_{K, \kappa}$ and $V$ be as in Theorem \ref{Guo}. Assume $\Sigma$ is convex and satisfies the condition \eqref{h-ij-2}. Then \eqref{AF-2} holds.

\end{corollary}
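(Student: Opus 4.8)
The plan is to derive Corollary~\ref{cor-AF-free} from Theorem~\ref{AF-free} by checking that its two standing hypotheses on $\Sigma$ — convexity ($h_{ij}\geq 0$) together with \eqref{h-ij-2} — force the sub-static condition \eqref{sub-sta}. All the remaining hypotheses of Theorem~\ref{AF-free} (that $\Omega\subset B^{\rm int}_{K,\kappa}$ and that $V$ is the weight of Theorem~\ref{Guo}) are inherited verbatim, and once \eqref{sub-sta} is verified the inequality \eqref{AF-2} follows directly from Theorem~\ref{AF-free} (in whichever of cases (i),(ii) the ambient support hypersurface places us). So the entire task reduces to a pointwise algebraic computation on $\Sigma$.

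First I would record the defining property of the weight in a space form, namely $\bar\nabla^2 V=-KV\bar g$ (equivalently, $V$ is a static potential: $\bar\Delta V\,\bar g-\bar\nabla^2 V+V\,\overline{Ric}=0$, using $\overline{Ric}=(n-1)K\bar g$). This is checked directly from the explicit weights $V=\cosh r$, $V=\cos r$, or the Euclidean weight of \eqref{E-V}, via the Hessian of a radial function in $\mathbb{M}^n(K)$. Restricting $V$ to $\Sigma$ with outward unit normal $\nu$ and $h_{ij}=\langle\bar\nabla_{e_i}\nu,e_j\rangle$, the Gauss formula gives the intrinsic Hessian $\nabla^2_{ij}V=(\bar\nabla^2 V)_{ij}-V_\nu h_{ij}=-KV g_{ij}-V_\nu h_{ij}$, and tracing, $\Delta V=-(n-1)KV-V_\nu H$. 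Separately, the Gauss equation in $\mathbb{M}^n(K)$ gives the intrinsic Ricci tensor of $\Sigma^{n-1}$ as $Ric_{ij}=(n-2)Kg_{ij}+Hh_{ij}-(h^2)_{ij}$.

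Substituting these three expressions into $\Delta V g_{ij}-\nabla^2_{ij}V+V\,Ric_{ij}$, the terms proportional to $K$ cancel identically (their total coefficient is $-(n-1)+1+(n-2)=0$), leaving $-V_\nu(Hg_{ij}-h_{ij})+V(Hh_{ij}-(h^2)_{ij})$. Since $g$, $h$ and $h^2$ are simultaneously diagonalizable, I would diagonalize $h$ with principal curvatures $\kappa_1,\dots,\kappa_{n-1}$; dividing by $V>0$ (the positivity asserted in the Remark following Theorem~\ref{Guo}), the $i$-th diagonal entry factors neatly as $(\kappa_i-V_\nu/V)(H-\kappa_i)$. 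Convexity gives $\kappa_i\geq 0$, whence $H-\kappa_i=\sum_{j\neq i}\kappa_j\geq 0$, while \eqref{h-ij-2}, which reads $\kappa_i\geq \bar\nabla_\nu\log V=V_\nu/V$, gives $\kappa_i-V_\nu/V\geq 0$; each eigenvalue is therefore a product of two nonnegative numbers, so \eqref{sub-sta} holds and Theorem~\ref{AF-free} yields \eqref{AF-2}.

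The main point to get right — and the only place the argument could go wrong — is the bookkeeping of the sign conventions for $\nu$ and $h$, so that the two hypotheses align precisely with the two nonnegative factors: with the opposite normal orientation the factorization would instead read $(\kappa_i+V_\nu/V)(H-\kappa_i)$, for which \eqref{h-ij-2} no longer closes the argument. It is exactly the cancellation of the $K$-terms and this clean factorization into $(\kappa_i-V_\nu/V)(H-\kappa_i)$ that make convexity together with \eqref{h-ij-2} the precise input needed, with no further curvature assumption.
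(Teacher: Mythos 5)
Your proposal is correct and follows essentially the same route as the paper's own proof: both compute $\Delta V$, $\nabla^2 V$ and $Ric$ on $\Sigma$ via Proposition~\ref{xaa2} and the Gauss equation, observe the cancellation of the $K$-terms, and factor the sub-static tensor as $\left(Vh_{\beta\gamma}-\nabla_\nu V g_{\beta\gamma}\right)\left(Hg_{\alpha\gamma}-h_{\alpha\gamma}\right)\geq 0$, with convexity and \eqref{h-ij-2} supplying the nonnegativity of the two factors (your eigenvalue form $(\kappa_i-V_\nu/V)(H-\kappa_i)\geq 0$ is the same factorization in a principal basis). The paper additionally uses the maximum principle to produce a point where the tensor is strictly positive definite, but that is only needed for the rigidity statement of Theorem~\ref{AF-free}, not for the inequality \eqref{AF-2} asserted in the corollary, so your argument suffices for the statement as given.
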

\
In the end, we follow the same argument of Theorem \ref{AF-free} to generalize De Lellis-Topping type almost Schur lemma \cite{DP, XC} into free boundary version in space forms.
\begin{theorem}\label{Almost-free}
Let $\O^{n}\subset B^{{\rm int}}_{K, \kappa}$ $(n\geq4)$ and $V$ be as in Theorem \ref{Guo}. Suppose $\Sigma$ is sub-static.
\begin{itemize}
  \item [(i)]If $S_{K,\kappa}$ is an Euclidean plane $(\kappa=0)$ or a support hypersurface $(0\leq\kappa\leq1)$ in $\mathbb{H}^{n}$. Then
\begin{equation}\label{Almost}
  \int_{\Sigma}|\mathcal{R}-\mathcal{R}^{V}|^{2}V dA\leq\frac{4(n-1)(n-2)}{(n-3)^{2}}\int_{\Sigma}\left(Ric-\frac{\mathcal{R}}{n-1}g\right)^{2} V dA,
\end{equation}
where $\mathcal{R}^{V}:=\frac{\int_{\Sigma}\mathcal{R}VdA}{\int_{\Sigma}VdA}$ and $\mathcal{R}$ is scalar curvature of $\Sigma$.
\item [(ii)]If $S_{K,\kappa}$ is a geodesic sphere in $\mathbb{M}^{n}(K)$ or a totally geodesic hyperplane $(\kappa=0)$ in $\mathbb{S}^{n}$. Then the same conclusion in (i) holds provided $\Omega\subset B^{{\rm int},+}_{K, \kappa}$.
\end{itemize}
Moreover, if there exists one point in $\Sigma$ such that \eqref{sub-sta} strictly holds, then the above equality \eqref{Almost} holds if and only if $(\Sigma, g)$ is Einstein.
\end{theorem}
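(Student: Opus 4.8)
The plan is to transplant the De Lellis--Topping scheme onto the free boundary hypersurface $\Sigma$, regarded as an $(n-1)$-dimensional Riemannian manifold with boundary $\Gamma=\partial\Sigma$ carrying the weight $V$, in complete parallel with the proof of Theorem \ref{AF-free}. Set $m=n-1=\dim\Sigma$ and write $\mathring{Ric}:=Ric-\frac{\mathcal{R}}{n-1}g$ for the trace-free Ricci tensor of $(\Sigma,g)$. First I would solve the weighted Neumann problem
\[
\operatorname{div}(V\nabla\phi)=V(\mathcal{R}-\mathcal{R}^{V})\ \ \text{in}\ \Sigma,\qquad \phi_{\mu}=0\ \ \text{on}\ \Gamma,
\]
where $\mu$ denotes the outward unit conormal of $\Gamma$ in $\Sigma$; this is solvable precisely because $\int_{\Sigma}V(\mathcal{R}-\mathcal{R}^{V})\,dA=0$ by the definition of $\mathcal{R}^{V}$. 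Multiplying by $\mathcal{R}-\mathcal{R}^{V}$ and integrating by parts, the $\Gamma$-term drops out by the Neumann condition and yields $\int_{\Sigma}V(\mathcal{R}-\mathcal{R}^{V})^{2}\,dA=-\int_{\Sigma}V\langle\nabla\mathcal{R},\nabla\phi\rangle\,dA$.

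Next I would invoke the contracted second Bianchi identity on $\Sigma$, namely $\nabla\mathcal{R}=\frac{2(n-1)}{n-3}\operatorname{div}(\mathring{Ric})$, and integrate by parts a second time to obtain
\[
\int_{\Sigma}V(\mathcal{R}-\mathcal{R}^{V})^{2}\,dA=\frac{2(n-1)}{n-3}\int_{\Sigma}\big\langle \mathring{Ric},\,V\nabla^{2}\phi+dV\otimes d\phi\big\rangle\,dA-\frac{2(n-1)}{n-3}\int_{\Gamma}V\,\mathring{Ric}(\mu,\nabla\phi)\,ds.
\]
Here the boundary integral vanishes, and this is the first place the free boundary geometry enters: since $\Sigma$ meets the umbilical hypersurface $S_{K,\kappa}$ orthogonally along $\Gamma$, the conormal $\mu$ is a principal direction, i.e. $h(\mu,e_{a})=0$ for every $e_{a}\in T\Gamma$, whence the Gauss equation in $\mathbb{M}^{n}(K)$ gives $Ric(\mu,e_{a})=0$; combined with $\phi_{\mu}=0$ this forces $\mathring{Ric}(\mu,\nabla\phi)=0$ on $\Gamma$.

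Since $\mathring{Ric}$ is trace-free, the interior bracket equals $\langle\mathring{Ric},T\rangle$ for the trace-free symmetric tensor $T:=V\nabla^{2}\phi+\tfrac12(dV\otimes d\phi+d\phi\otimes dV)-\tfrac1{n-1}V(\mathcal{R}-\mathcal{R}^{V})g$, so Cauchy--Schwarz gives $\int_{\Sigma}\langle\mathring{Ric},T\rangle\,dA\le(\int_{\Sigma}V|\mathring{Ric}|^{2}\,dA)^{1/2}(\int_{\Sigma}V^{-1}|T|^{2}\,dA)^{1/2}$. The decisive step is then a weighted Bochner (Reilly-type) identity on $(\Sigma,g)$: integrating the Bochner formula for $\phi$ against $V$ and integrating by parts, the condition $\phi_{\mu}=0$ together with the fact that $\Gamma$ is umbilical in $\Sigma$ with principal curvature $\kappa\ge0$ (a second consequence of the orthogonal free boundary) produces a boundary term $\kappa\int_{\Gamma}V|\nabla^{\Gamma}\phi|^{2}\,ds\ge0$ of the favorable sign, and leads to
\[
\int_{\Sigma}V^{-1}|T|^{2}\,dA+\int_{\Sigma}\big(\Delta V g-\nabla^{2}V+V\,Ric\big)(\nabla\phi,\nabla\phi)\,dA+\kappa\int_{\Gamma}V|\nabla^{\Gamma}\phi|^{2}\,ds=\frac{n-2}{n-1}\int_{\Sigma}V(\mathcal{R}-\mathcal{R}^{V})^{2}\,dA.
\]
The sub-static hypothesis \eqref{sub-sta} makes the second integral non-negative, so $\int_{\Sigma}V^{-1}|T|^{2}\,dA\le\frac{n-2}{n-1}\int_{\Sigma}V(\mathcal{R}-\mathcal{R}^{V})^{2}\,dA$; feeding this back through the Cauchy--Schwarz estimate and cancelling one factor of $(\int_{\Sigma}V(\mathcal{R}-\mathcal{R}^{V})^{2}\,dA)^{1/2}$ produces the sharp constant $\big(\tfrac{2(n-1)}{n-3}\big)^{2}\cdot\tfrac{n-2}{n-1}=\tfrac{4(n-1)(n-2)}{(n-3)^{2}}$, which is \eqref{Almost}.

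For the rigidity, equality saturates Cauchy--Schwarz, forcing $V\mathring{Ric}=\lambda T$ pointwise for a constant $\lambda$, and forces both the sub-static integral and the boundary term to vanish; evaluating $(\Delta V g-\nabla^{2}V+V\,Ric)(\nabla\phi,\nabla\phi)\equiv0$ at a point where \eqref{sub-sta} is strict and propagating through the proportionality and the defining equation forces $\mathring{Ric}\equiv0$, i.e. $(\Sigma,g)$ is Einstein; the converse is immediate since an Einstein manifold of dimension $n-1\ge3$ has constant scalar curvature, so both sides of \eqref{Almost} vanish. I expect the main obstacle to be establishing the weighted Bochner identity above together with the precise evaluation of its $\Gamma$-boundary term, i.e. verifying that the free boundary condition indeed produces the term $\kappa\int_{\Gamma}V|\nabla^{\Gamma}\phi|^{2}\,ds$ with the correct sign; the two geometric facts $h(\mu,e_{a})=0$ and $\mathrm{II}_{\Gamma}=\kappa\, g|_{\Gamma}$ along $\Gamma$ are the technical core that makes all boundary contributions behave.
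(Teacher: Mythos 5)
Your overall skeleton is the same as the paper's: the paper proves this theorem by rerunning the proof of Theorem \ref{AF-free} with $h$ replaced by $Ric$, using the contracted Bianchi identity $\nabla\mathcal{R}=\frac{2(n-1)}{n-3}\mathrm{div}(\mathring{Ric})$ and the fact that $h(\mu,e)=0$ for $e\in T\Gamma$ forces $Ric(\mu,e)=0$, so all $\Gamma$-terms drop; your Bianchi constant, your boundary-vanishing argument, and your rigidity outline all agree with that. However, there is a genuine gap at precisely the step you flag as the ``main obstacle'': the weighted Bochner identity you postulate is \emph{false}, and the error has the unfavorable sign. Writing $S:=V\nabla^{2}\phi+\tfrac12(dV\otimes d\phi+d\phi\otimes dV)$, so that $T=S-\tfrac{1}{n-1}\div(V\nabla\phi)\,g$, the Bochner commutation plus integration by parts yields (already on a closed manifold, where no boundary terms can interfere)
\[
\int_{\Sigma}V^{-1}|T|^{2}dA=\frac{n-2}{n-1}\int_{\Sigma}V^{-1}\bigl(\div(V\nabla\phi)\bigr)^{2}dA-\int_{\Sigma}\bigl(\Delta Vg-\nabla^{2}V+V\,Ric\bigr)(\nabla\phi,\nabla\phi)\,dA+\frac12\int_{\Sigma}V^{-1}\bigl(|\nabla V|^{2}|\nabla\phi|^{2}-\langle\nabla V,\nabla\phi\rangle^{2}\bigr)dA.
\]
Your claimed identity omits the last term, which is pointwise non-negative by Cauchy--Schwarz and sits on the wrong side: even with the sub-static hypothesis it cannot be absorbed, so the needed bound $\int_{\Sigma}V^{-1}|T|^{2}dA\le\frac{n-2}{n-1}\int_{\Sigma}V(\mathcal{R}-\mathcal{R}^{V})^{2}dA$ does not follow. (A quick sanity check: on a flat $2$-torus with $V=V(x)$ nonconstant and $\phi=\phi(y)$, the sub-static term integrates to zero while the defect term is strictly positive, so the two sides of your identity genuinely differ.) Your scheme is correct only when $V$ is constant, i.e.\ the unweighted De Lellis--Topping case $K=0$, $\kappa=0$.

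The repair is exactly the paper's (Li--Xia's) substitution, which differs from yours in two crucial places. One solves $\div(V^{2}\nabla\omega)=V(\mathcal{R}-\mathcal{R}^{V})$ (weight $V^{2}$, not $V$) with $\omega_{\mu}=0$ on $\Gamma$, sets $f=\omega V$, and works with
\[
A_{\alpha\beta}=\nabla^{2}_{\alpha\beta}f-\frac{\nabla^{2}_{\alpha\beta}V}{V}f=V\nabla^{2}_{\alpha\beta}\omega+\nabla_{\alpha}V\nabla_{\beta}\omega+\nabla_{\alpha}\omega\nabla_{\beta}V,
\]
i.e.\ with coefficient $1$ (not $\tfrac12$) on the symmetrized product, estimating $\int_{\Sigma}V|\mathring{A}|^{2}dA$ rather than $\int_{\Sigma}V^{-1}|T|^{2}dA$. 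For this tensor no new Bochner identity needs to be derived: Theorem \ref{QLX} applied to $\Omega=\Sigma$ is an exact identity, $\int_{\Sigma}V\bigl(A^{2}-|A_{\alpha\beta}|^{2}\bigr)dA=\int_{\Sigma}V^{2}\bigl(\Delta Vg-\nabla^{2}V+V\,Ric\bigr)(\nabla\omega,\nabla\omega)\,dA+\text{boundary terms}$, and the boundary terms vanish identically because $f_{\mu}-\frac{V_{\mu}}{V}f=V\omega_{\mu}=0$ and because $\Gamma$ is umbilical in $\Sigma$ with principal curvature $\kappa=V_{\mu}/V$, so $h_{\Gamma}-\frac{V_{\mu}}{V}g_{\Gamma}\equiv0$ (note: no sign argument or term $\kappa\int_{\Gamma}V|\nabla^{\Gamma}\phi|^{2}ds$ is needed; the term is zero). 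With $\mathring{A}$ in place of your $T$, the rest of your argument --- Bianchi, the vanishing of $\mathring{Ric}(\mu,\nabla f-\frac{\nabla V}{V}f)$ on $\Gamma$, weighted Cauchy--Schwarz, and the rigidity discussion --- goes through verbatim and gives the constant $\frac{4(n-1)(n-2)}{(n-3)^{2}}$.
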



\


\section{Weight functions in space forms}
In this section we will introduce the weight functions in space forms for the free boundary setting. First of all, we have the following crucial proposition that we will be used later.
\begin{prop}[\cite{LS, WengX, HWYZ}]\label{xaa2999e}
Let $\Sigma$ be a strictly convex hypersurface with free boundary in an Euclidean ball $\mathbb{B}^{n}_{r}(0)$. Then there exists a constant vector $e\in int(\widehat{\partial\Sigma})$ such that the following estimates hold on $\Sigma$:
\begin{itemize}
  \item [(1)] $\langle x,e\rangle\geq\delta_{1}$;
  \item [(2)] $\langle\nu,e\rangle\leq-\delta_{2}$;
  \item [(3)]$\langle x-e,\nu\rangle\geq\delta_{3}$;
  \item [(4)]$\langle x,\nu\rangle\leq0$,
\end{itemize}
where $x$ is the position vector in $\mathbb{R}^{n}$, $\nu$ is the unit outward normal vector of $\Sigma$, and $\delta_{i}$, $i=1, 2, 3$ are positive constants depending on $\Sigma$.
\end{prop}
\begin{proof}
See \cite[Section 4]{LS}, \cite[Proposition 2.13]{WengX} or \cite[Proposition 2.5]{HWYZ}.
\end{proof}

\


\noindent{\bf In $\mathbb{R}^{n}$ case $(K=0)$.}\
\begin{itemize}
  \item [$(i).$]If $S_{K,\kappa}$ is a sphere with radius $R$ in $\mathbb{R}^{n}$, then $\kappa=\frac{1}{R}\in(0,\infty)$. We denote $B_{K, \kappa}^{\rm int}=\left\{x\in \rr^n: |x|\le R\right\}.$
 Moreover, let
\begin{eqnarray}\label{half-ball-E}
B^{{\rm int},+}_{K, \kappa}=\left\{x\in B_{K, \kappa}^{\rm int}: \langle x,e\rangle>0\right\}
\end{eqnarray}
be an Euclidean half ball and $e$ be a fixed constant vector.

\

\

\item  [$(ii).$]If $S_{K,\kappa}$ is a hyperplane in $\mathbb{R}^{n}$, then $\kappa=0$. We denote a half-space $B_{K, \kappa}^{\rm int}$ by
\begin{equation*}
B_{K, \kappa}^{\rm int}=\left\{x\in \rr^n:  x_{n}>0\right\}.
\end{equation*}
Let $V$ be a function in $\mathbb{R}^{n}$ as follows:
 \end{itemize}
\begin{equation}\label{E-V}
 V=
\begin{cases}{}
\langle x,e\rangle, & \text{if}\ \kappa>0,\\
1, & \text{if}\ \kappa=0.\\
\end{cases}
\end{equation}

\

\noindent{\bf In $\mathbb{H}^{n}$ case $(K=-1)$.}
\begin{itemize}
   \item [$(i).$] If $S_{K,\kappa}$ is a geodesic sphere of radius $R$ in $\mathbb{H}^{n}$,  then $\kappa=\coth R\in (1,\infty)$ and let $B_{K, \kappa}^{\rm int}$ denote the geodesic ball enclosed by $S_{K, \kappa}$. By using the  Poincar\'e ball model:
\begin{eqnarray}\label{poin-ball}
\mathbb{B}^{n}=\{x\in \rr^n: |x|<1\}, \quad \bar{g}=\frac{4}{(1-|x|^{2})^{2}}\delta.
\end{eqnarray}
We have, up to an hyperbolic isometry,
  $$B_{K, \kappa}^{\rm int}=\left\{x\in \bb^n: |x|\le R_\rr:=\sqrt{\frac{1-\arccosh R}{1+\arccosh R}}\right\}.$$
 Moreover, let
\begin{eqnarray}\label{half-ball}
B^{{\rm int},+}_{K, \kappa}=\left\{x\in B_{K, \kappa}^{\rm int}: \langle x,e\rangle>0\right\}
\end{eqnarray}
 be a geodesic half ball and $e$ be a fixed constant vector in $\mathbb{R}^{n}$.
 \item [$(ii).$] If $S_{K,\kappa}$ is a support hypersurface, i.e. totally geodesic hyperplane $(\kappa=0)$, equidistant hypersurface $(0<\kappa<1)$ or horosphere $(\kappa=1)$, then $\kappa\in [0, 1]$.
By using the upper half-space model:
 \begin{eqnarray}\label{half-space}
\hh^n=\{x=(x_{1}, x_{2},\cdots,x_{n})\in \rr^n_+: x_n>0\},\quad \bar g=\frac{1}{x_n^2}\delta.
\end{eqnarray}
Let
  \begin{equation}\label{support-hy}
B_{K, \kappa}^{\rm int}=
\begin{cases}{}
\{x\in\mathbb{R}^{n}_{+}: x_n>1\},     & \text{if}\,\, \kappa=1,\\
\{x\in\mathbb{R}^{n}_{+}: x_1\tan \theta +x_{n}>1\}, & \text{if}\,\,\kappa=\cos \theta\in (0, 1),\\
\{x\in\mathbb{R}^{n}_{+}: x_1>0\}, & \text{if}\,\,\kappa=0.\\
\end{cases}
\end{equation}

\end{itemize}

\quad\quad We denote a function $V$ in $\mathbb{H}^{n}$ as follows:
\begin{equation}\label{H-V}
V=
\begin{cases}{}
\frac{2\langle x,e\rangle}{1-|x|^{2}}, & \text{if}\ \kappa>1\,\,\text{in Poincar\'{e} ball model}\,\eqref{poin-ball},\\
\frac{1}{x_{n}}, & \text{if}\ 0\leq\kappa\leq 1\,\,\text{in upper half-space model}\,\eqref{half-space}.\\
\end{cases}
\end{equation}

\vspace{0.4cm}

\noindent{\bf In $\mathbb{S}^{n}$ case $(K=1)$.}

We use the following model
\begin{equation}\label{Sphere-model}
  \left(\rr^{n},\, \bar g_{\ss}=\frac{4}{(1+|x|^2)^2}\delta\right)
\end{equation}
to represent $\ss^{n}\setminus\{{s}\}$, the unit sphere without the south pole. Let $B_R^{\ss}$ be a geodesic ball in $\ss^{n}$ with radius $R\in (0, \pi)$ centered at the north pole. The corresponding $R_\rr:=\sqrt{\frac{1-\cos R}{1+\cos R}}\in (0, \infty)$.
\begin{itemize}
   \item [$(i).$] If $S_{K,\kappa}$ is a geodesic sphere of radius $R$ in $\mathbb{S}^{n}$,  then $\kappa=\cot R\in (0,\infty)$, for $R<\frac{\pi}{2}$. In the above model
$$B^{\rm int}_{K,\kappa}=\left\{x\in \rr^n: |x|<R_{\rr}\right\}.$$
Let $B^{{\rm int},+}_{K, \kappa}$ be a geodesic half ball given by
\begin{equation}\label{half-ball-sphere}
  B^{{\rm int},+}_{K, \kappa}=\{x\in B^{{\rm int}}_{K, \kappa}:\, \langle x,e\rangle>0 \},
\end{equation}
where $e$ is a fixed constant vector in $\mathbb{R}^{n}$.

\item [$(ii).$] If $S_{K,\kappa}$ is a totally geodesic hyperplane in $\mathbb{S}^{n}$, then $\kappa=0$. In the above model $$B^{\rm int}_{K,\kappa}=\mathbb{S}^{n}_{+}:=\left\{x\in \rr^n: x_{n}>0\right\}.$$
Let $B^{{\rm int},+}_{K, \kappa}$ be given by
\begin{equation}\label{half-plane}
  B^{{\rm int},+}_{K, \kappa}=\{x\in B^{{\rm int}}_{K, \kappa}:\, |x|<1 \}.
\end{equation}
\noindent We denote a function $V$ in $\mathbb{S}^{n}$ as follows:
\begin{equation}\label{S-V}
V=
\begin{cases}{}
\frac{2\langle x,e\rangle}{1+|x|^2}, & \text{if}\ \kappa>0,\\
\frac{1-|x|^{2}}{1+|x|^2}, & \text{if}\ \kappa=0.\\
\end{cases}
\end{equation}

\end{itemize}

\begin{prop}[\cite{GX2,WX}]\label{xaa2} \,The weight function $V$ satisfies the following properties:
\
\begin{equation}\label{va2}
  \bar{\n}^2 V= -KV  \bar{g },\quad\text{in}\,\, \mathbb{M}^{n}(K),
\end{equation}
\begin{equation}\label{va3}
  \nabla_{\bar{N}}V=\kappa V, \quad\,\,\text{on}\,\, S_{K,\kappa}.
\end{equation}
where $\bar N$ is the outward unit normal of $B_{K,\kappa}^{\rm int}$.
\end{prop}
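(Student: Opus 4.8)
The plan is to verify the two identities by direct computation in each of the explicit conformally flat models fixed above, treating $K=0,-1,1$ in turn. In every model the metric has the form $\bar g = e^{2\phi}\delta$ for an explicit $\phi$ ($\phi=0$ in $\mathbb{R}^n$; $\phi=-\log x_n$ in the upper half-space model \eqref{half-space}; $\phi=\log 2-\log(1-|x|^2)$ in the Poincar\'e ball \eqref{poin-ball}; $\phi=\log 2-\log(1+|x|^2)$ in the stereographic model \eqref{Sphere-model}), and the weight $V$ given by \eqref{E-V}, \eqref{H-V}, \eqref{S-V} is an explicit rational function of the flat coordinates. Thus both identities reduce to algebra once the right differential-geometric formulas are in place.

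For \eqref{va2} I would use the conformal transformation law relating the Hessian of $V$ with respect to $\bar g=e^{2\phi}\delta$ to flat derivatives,
\begin{equation*}
(\bar\nabla^2 V)_{ij}=\partial_i\partial_j V-\bigl(\partial_i\phi\,\partial_j V+\partial_j\phi\,\partial_i V\bigr)+\delta_{ij}\sum_k\partial_k\phi\,\partial_k V,
\end{equation*}
and substitute the explicit $\phi$ and $V$ in each case, the goal being to see the right-hand side collapse to $-KVe^{2\phi}\delta_{ij}=-KV\bar g_{ij}$. The Euclidean case is immediate since $V$ is affine. In the upper half-space model, with $V=1/x_n$, the term $\partial_i\partial_j V$ and the mixed $\phi$-term both equal $2\delta_{in}\delta_{jn}/x_n^3$ and cancel, leaving $\delta_{ij}/x_n^3=V\bar g_{ij}$, exactly what is needed for $K=-1$; the totally geodesic sphere model $V=(1-|x|^2)/(1+|x|^2)$ is handled the same way using $\partial_i V=-4x_i/(1+|x|^2)^2$ together with the identity $(1-|x|^2)+(1+|x|^2)=2$.

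For \eqref{va3} I would, on each support hypersurface $S_{K,\kappa}$, first record the $\bar g$-unit outward normal $\bar N$: since every $S_{K,\kappa}$ appears in the model as a Euclidean hyperplane or Euclidean sphere, $\bar N$ is simply the flat outward unit normal rescaled by $e^{-\phi}$, so that $\nabla_{\bar N}V=dV(\bar N)$ is a single contraction of the flat gradient of $V$ with $\bar N$. The content of the identity is that this contraction equals $\kappa V$ on $S_{K,\kappa}$ after invoking the relation between the principal curvature $\kappa$, the intrinsic radius $R$, and the Euclidean model radius $R_\rr$. For instance, in the Poincar\'e ball one finds $\nabla_{\bar N}V=\frac{x_n}{|x|}\cdot\frac{1+|x|^2}{1-|x|^2}$ while $\kappa V=\kappa\cdot\frac{2x_n}{1-|x|^2}$, so the claim is equivalent to $\kappa=\frac{1+R_\rr^2}{2R_\rr}$, i.e.\ the half-angle identity $\coth R=\frac{1+\tanh^2(R/2)}{2\tanh(R/2)}$; the stereographic model yields the companion identity $\cot R=\frac{1-\tan^2(R/2)}{2\tan(R/2)}$, and the $\kappa=0$ supports give $\nabla_{\bar N}V=0$ directly. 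For the three upper half-space supports $x_1=0$ $(\kappa=0)$, $x_1\tan\theta+x_n=1$ $(\kappa=\cos\theta)$ and $x_n=1$ $(\kappa=1)$ one computes $\bar N$ explicitly and checks $\nabla_{\bar N}V=\kappa V$ on each; the equidistant case, for example, gives $\bar N=-x_n(\sin\theta,0,\dots,0,\cos\theta)$ and hence $\nabla_{\bar N}V=\cos\theta/x_n=\kappa V$.

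I expect the only genuine work to lie in the Poincar\'e-ball and stereographic verifications of \eqref{va2}, where $\phi$ and $V$ are both nonlinear and the cancellation of the $4x_nx_i/(1\mp|x|^2)^2$ terms must be tracked carefully; the boundary identity \eqref{va3} is then essentially bookkeeping with the half-angle formulas above. A slicker but less self-contained route would be to note that in each space form $V$ is the restriction of a linear function on the flat ambient space ($\mathbb{R}^{n+1}$ for $\mathbb{S}^n$, Minkowski $\mathbb{R}^{n,1}$ for $\mathbb{H}^n$), for which $\bar\nabla^2 V=-KV\bar g$ is classical; I would mention this as a remark but carry out the coordinate computation to stay within the models already set up.
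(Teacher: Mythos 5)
Your proposal is correct, and it is worth noting that the paper itself contains no proof of this proposition at all: it simply cites \cite{GX2} (Proposition 2.2) and \cite{WX} (Proposition 4.2). So your computation is a genuinely self-contained substitute for an argument the paper delegates to references. Your conformal Hessian formula $(\bar\nabla^2 V)_{ij}=\partial_i\partial_j V-(\partial_i\phi\,\partial_j V+\partial_j\phi\,\partial_i V)+\delta_{ij}\sum_k\partial_k\phi\,\partial_k V$ is the right one for $\bar g=e^{2\phi}\delta$, and the individual verifications you spell out all check: in the half-space model the cancellation $\partial_i\partial_j V=2\delta_{in}\delta_{jn}/x_n^3=$ mixed term leaves exactly $V\bar g_{ij}$; in the Poincar\'e ball the $16x_nx_ix_j/(1-|x|^2)^3$ terms cancel between $\partial^2V$ and the mixed term, and the trace term combines with the leftover $4\delta_{ij}x_n/(1-|x|^2)^2$ via $(1-|x|^2)+|x|^2=1$ to give $8x_n\delta_{ij}/(1-|x|^2)^3=V\bar g_{ij}$; the stereographic cases work identically with sign flips. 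Your boundary computations \eqref{va3} are also right, including the orientation of $\bar N$ for the equidistant hypersurface, and the reduction of the geodesic-sphere cases to the half-angle identities $\coth R=\frac{1+\tanh^2(R/2)}{2\tanh(R/2)}$ and $\cot R=\frac{1-\tan^2(R/2)}{2\tan(R/2)}$ — note this implicitly uses $R_{\rr}=\tanh(R/2)$, the correct Euclidean radius of the hyperbolic geodesic ball, rather than the paper's garbled formula $R_\rr=\sqrt{\frac{1-\arccosh R}{1+\arccosh R}}$, which is evidently a typo for $\sqrt{\frac{\cosh R-1}{\cosh R+1}}$. Your closing remark — that each $V$ is the restriction of a linear coordinate of the ambient flat (or Minkowski) space, for which $\bar\nabla^2V=-KV\bar g$ is classical — is in fact the conceptual reason all these computations succeed, and is essentially how the cited references organize the proof; either route would serve the paper well.
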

\begin{proof}
See \cite[Proposition 2.2]{GX2} and \cite[Proposition 4.2]{WX}.
\end{proof}

\section{Proof of main results}

\noindent{\bf Proof of Theorem \ref{Guo}.}\
Let $f$ be the solution to the following mixed boundary value problem:
\begin{equation}\label{Fredholm}
\begin{cases}{}
\bar{\Delta} f+nKf=1, & \text{in}\ \Omega,\\
Vf_{\nu}-V_{\nu}f=cV, & \text{on}\ \Sigma,\\
f_{\bar{N}}-\kappa f=0, & \text{on}\ T,
\end{cases}
\end{equation}
where $V_{\nu}:=\bar{\nabla}_{\nu} V$ and $c=\frac{\int_{\Omega} Vd\Omega}{\int_{\Sigma} VdA}$. Since $\Sigma$ intersects $T$ orthogonally, there exists a unique solution $f \in C^{\infty}(\bar{\Omega}\backslash\Gamma)\cap C^{1}(\bar{\Omega})$ to \eqref{Fredholm}, up to an additive $\alpha V$ for $\alpha \in \mathbb{R}$. In fact, it follows from the Fredholm alternative \cite{GT} that there exists a unique solution $w \in C^{\infty}(\bar{\Omega}\backslash\Gamma)\cap C^{1}(\bar{\Omega})$\footnote{For the
existence and the regularity of $w$, we can see in \cite{Gruter}, \cite[Section 4.1]{Liebm} or \cite[Remark 3.1]{LWW}.} (up to an additive constant) to the following Neumann equation:
\begin{equation}\label{Fredholm2}
\begin{cases}{}
\operatorname{div}\left(V^{2} \bar{\nabla} w\right)=V, & \text{in}\ \Omega,\\
V^{2} w_{\nu}=c V, & \text{on}\ \Sigma,\\
V^{2} w_{\bar{N}}=0, & \text{on}\ T,
\end{cases}
\end{equation}
if and only if $c=\frac{\int_{\Omega} Vd\Omega}{\int_{\Sigma} V dA}$. By Proposition \ref{xaa2}, one checks that $f=w V$ solves \eqref{Fredholm}.
From Proposition \ref{xaa2} again, we can see
\begin{eqnarray}\label{facts}
&&\ode V+nK V=0 ,\quad \bar \Delta V\bar g-\bar \n^2 V+V\cdot\overline{{\rm Ric}}=0.
\end{eqnarray}
Using H\"{o}lder inequality and the equation in \eqref{facts}, we derive from Theorem \ref{QLX} that
\begin{eqnarray}
\frac{n-1}{n} \int_{\Omega} V d\Omega&\geq&  \int_{\Omega} V\left((\bar{\Delta} f+n K f)^{2}-\left|\bar{\nabla}^{2} f+K f \bar{g}\right|^{2}\right)d\Omega \label{green-11}\\
&=& c\int_{\Sigma} \left(V\Delta f-\Delta Vf\right)dA+c^{2}\int_{\Sigma}VH dA\nonumber\\
&&\quad+\int_{\Sigma}\left(h-\frac{V_{\nu}}{V}g\right)\left(\nabla f-\frac{\nabla V}{V}f,\nabla f-\frac{\nabla V}{V}f\right)dA\nonumber\\
&&\quad +\int_{T}\left(h-\frac{V_{\bar{N}}}{V}g\right)\left(\nabla f-\frac{\nabla V}{V}f, \nabla f-\frac{\nabla V}{V}f\right)dA\nonumber\\
&=& c\int_{\Sigma} \left(V\Delta f-\Delta Vf\right)dA+c^{2}\int_{\Sigma}VH dA\nonumber\\
&&\quad+\int_{\Sigma}\left(h-\frac{V_{\nu}}{V}g\right)\left(\nabla f-\frac{\nabla V}{V}f,\nabla f-\frac{\nabla V}{V}f\right)dA.\nonumber
\end{eqnarray}
The last equality we used that $T\subseteq S_{K,\kappa}$ and \eqref{va3}.

Now we estimate the right hand side of \eqref{green-11}. By using the free boundary assumption $\mu=\bar{N}$ along $\partial\Sigma=\partial T$, we imply
\begin{equation}\label{free-bd}
  c\int_{\Sigma} \left(V\Delta f-\Delta Vf\right)dA=c\int_{\partial\Sigma} \left(Vf_{\mu}- V_{\mu}f\right)ds=c\int_{\partial T} \left(Vf_{\bar{N}}- V_{\bar{N}}f\right)ds=0.
\end{equation}
Here we used \eqref{va3} and \eqref{Fredholm}.

By our assumption \eqref{h-ij-2} we have
\begin{equation}\label{hij-convex}
\int_{\Sigma}\left(h-\frac{V_{\nu}}{V}g\right)\left(\nabla f-\frac{\nabla V}{V}f,\nabla f-\frac{\nabla V}{V}f\right)dA\geq0.
\end{equation}

Substituting \eqref{free-bd}-\eqref{hij-convex} into \eqref{green-11} and using $c=\frac{\int_{\Omega} Vd\Omega}{\int_{\Sigma} VdA}$, we obtain the inequality
\begin{equation}\label{inequality}
  \left(\int_{\Sigma} V d A\right)^{2} \geq \frac{n}{n-1}\int_{\Omega} V d \Omega \int_{\Sigma} H V d A .
\end{equation}

If the equality in \eqref{inequality} holds, by checking the equality in \eqref{green-11} and \eqref{hij-convex}, we get
\begin{equation}\label{Obata}
\begin{cases}{}
\bar{\nabla}_{i j}^{2} f+Kf \bar{g}_{i j}=\frac{1}{n} \bar{g}_{i j}, & \text{in}\ \Omega,\\
\left(h-\frac{V_{\nu}}{V} g\right)\left(\nabla f-\frac{\nabla V}{V}f , \nabla f-\frac{\nabla V}{V}f \right)=0 , & \text{on}\ \Sigma.
\end{cases}
\end{equation}

Next we will prove $\Sigma$ is a part of an umbilical hypersurface in $\mathbb{M}^{n}(K)$ in the following cases.

\

\noindent{\bf In Euclidean space $\mathbb{R}^{n}$.} Let $\mathcal{S}:=\{x \in \Sigma \mid h(x)>0\}$.

{\bf Case 1.} If $S_{K,\kappa}$ is a sphere in $\mathbb{R}^{n}$. Since $\Omega\subseteq B_{K,\kappa}^{\rm int,+}$ (see \eqref{half-ball-E}) and $\Sigma$ is a compact hypersurface with free boundary supported on $S_{K,\kappa}$, there exists at least one elliptic point $p$ at which every principal curvature $h_{\alpha\beta}>0$. We can always assume $\mathcal{S}$ is a connected set, otherwise one can choose a connected component of $\mathcal{S}$ at $p$, still denoted by $\mathcal{S}$. Hence $\mathcal{S}$ is a non-empty  connected open set on $\Sigma$.

For any points in $\mathcal{S}$, we apply Proposition \ref{xaa2999e} to get
\begin{equation*}
  h_{\alpha\beta}-\frac{V_{\nu}}{V}g_{\alpha\beta}>-\frac{\langle\nu,e\rangle}{\langle x,e\rangle}g_{\alpha\beta}>0.
\end{equation*}
Thus by the boundary term in \eqref{Obata} we have $f=\alpha V$ for some $\alpha\in\mathbb{R}$ on $\mathcal{S}$.

On the other hand, denote $\tilde{f}:=f-\alpha V$, by \eqref{va2} and \eqref{Obata} we get
$$
\bar{\nabla}_{i j}^{2} \tilde{f}+K\tilde{f} \bar{g}_{i j}=\frac{1}{n} \bar{g}_{i j}\,\,\text{in}\,\, \Omega.
$$
Restricting the above equation on $\mathcal{S}$, where $\tilde{f}=0$ and $\tilde{f}_{\nu}=c>0$ on $\mathcal{S}$, we have
$$
h_{\alpha\beta} \tilde{f}_{\nu}=\frac{1}{n} {g}_{\alpha\beta} \text { on } \mathcal{S},
$$
which indicates that the points in $\mathcal{S}$ are umbilical, i.e. $h_{\alpha\beta}=\frac{1}{nc}{g}_{\alpha\beta}$ on $\mathcal{S}$. Thus $\mathcal{S}$ is also a closed set. Since $\Sigma$ is connected, which implies $\Sigma=\mathcal{S}$. Applying the above argument to $\Sigma$, we obtain that $\Sigma$ is umbilical.

{\bf Case 2.} If $S_{K,\kappa}$ is a hyperplane $\{x_{n}=0\}$ in $\mathbb{R}^{n}$.

Since $\Omega\subseteq B_{K,\kappa}^{\rm int}=\{x\in\mathbb{R}^{n}|\,x_{n}>0\}$ and $\Sigma$ is a compact hypersurface with free boundary supported on $S_{K,\kappa}$, there exists at least one elliptic point $p$ such that every principal curvature $h_{\alpha\beta}>0$ at $p$. Therefore, $\mathcal{S}$ is a non-empty connected open set on $\Sigma$.

For any points in $\mathcal{S}$, we have
\begin{equation*}
  h_{\alpha\beta}-\frac{V_{\nu}}{V}g_{\alpha\beta}=h_{\alpha\beta}>0.
\end{equation*}
Thus by the boundary term in \eqref{Obata} we have $f=\alpha V$ for some $\alpha\in\mathbb{R}$ on $\mathcal{S}$. Similar to the \textbf{Case 1}, we obtain $\Sigma=\mathcal{S}$ is a part of an umbilical hypersurface.


\

\noindent{\bf In Hyperbolic space $\mathbb{H}^{n}$.}\

{\bf Case 1.} If $S_{K,\kappa}$ is a support hypersurface in $\mathbb{H}^{n}$, i.e, a totally geodesic hyperplane $(\kappa=0)$, an equidistant hypersurface $(0<\kappa=\cos\theta<1)$ or horosphere $(\kappa=1)$. Using the upper half-space model \eqref{half-space}, we denote the support hypersurface by a tilted Euclidean hyperplane $\Pi$ which meets $\partial_{\infty}\mathbb{H}^{n}$ with angle $\theta$ through a point $E_{n}=(0,0,\cdots, 1)\in\mathbb{R}^{n}_{+}$, say
 $$\Pi=\{x\in\mathbb{R}^{n}_{+}: \sin\theta x_1 +\cos\theta x_{n}=\cos\theta\}\,\, \text{for}\,\,\theta\in[0,\frac{\pi}{2}].$$

Since $\Omega\subseteq B_{K,\kappa}^{\rm int}$ (see \eqref{half-ball}) and $\Sigma$ is a compact hypersurface with free boundary supported on $S_{K,\kappa}$, there exists at least one point $p$ at which the second fundamental form $h^{\delta}$ with respect to the Euclidean metric $\delta$ is positive definite.

Let $\mathcal{S}:=\{x \in \Sigma \mid (h-\frac{V_{\nu}}{V}g)(x)>0\}$. Recall the relationship of $h^{\delta}$ and $h$ that $h-\frac{V_{\nu}}{V}g=\frac{1}{V}h^{\delta}$ from \cite[page 191]{RAF3}. Then $\mathcal{S}:=\{x \in \Sigma \mid h^{\delta}(x)>0\}$ is a non-empty connected open set on $\Sigma$.

For any points in $\mathcal{S}$, we have
\begin{equation*}
  h_{\alpha\beta}-\frac{V_{\nu}}{V}g_{\alpha\beta}>0,\,\,\,\text{on}\,\, \mathcal{S}.
\end{equation*}
From the boundary term in \eqref{Obata} we have $f=\alpha V$ for some $\alpha\in\mathbb{R}$ on $\mathcal{S}$. Similar to the \textbf{Case 1 in $\mathbb{R}^{n}$}, we obtain $h_{\alpha\beta}=\frac{1}{nc}{g}_{\alpha\beta}$ on $\mathcal{S}$. Since the totally umbilical property is conformal invariance from \cite[Proposition 10.1.2]{RAF3}, that is, $h^{\delta}_{\alpha\beta}-\frac{H^{\delta}}{n}\delta_{\alpha\beta}=V(h_{\alpha\beta}-\frac{H}{n}g_{\alpha\beta})=0$ on $\mathcal{S}$, we know that $h^{\delta}_{\alpha\beta}=c_{1}{\delta}_{\alpha\beta}$ on $\mathcal{S}$ for some constant $c_{1}$. Due to free boundary condition, we get $c_{1}>0$. By the connectivity of $\Sigma$, we obtain that $\Sigma=\mathcal{S}$ is a truncated umbilical hypersurface.

{\bf Case 2.} If $S_{K,\kappa}$ is a geodesic sphere in $\mathbb{H}^{n}$. We use Poincar\'{e} ball model \eqref{poin-ball}.

 Since $\Omega\subseteq B_{K,\kappa}^{\rm int,+}\,(\text{see}\, \eqref{half-ball})$ and $\Sigma$ is a compact hypersurface with free bondary supported on $S_{K,\kappa}$, there exists at least one elliptic point $p$ at which the second fundamental form $h^{\delta}$ with respect to the Euclidean metric $\delta$ is positive definite.

 Let $\mathcal{S}:=\{x \in \Sigma \mid h^{\delta}(x)>0\}$. Then $\mathcal{S}$ is a non-empty connected open set on $\Sigma$. By the relationship of $h^{\delta}$ and $h$, we know that
 $h=\frac{(1-|x|^{2})}{2}h^{\delta}+\langle x,\nu_{\delta}\rangle g$. It follows that
 \begin{equation*}
   h_{\alpha\beta}-\frac{V_{\nu}}{V}g_{\alpha\beta}=\frac{(1-|x|^{2})}{2}h^{\delta}-\frac{(1-|x|^{2})\langle\nu_{\delta},e\rangle}{2\langle x,e\rangle}g>0, \,\,\,\text{on}\,\,\mathcal{S},
 \end{equation*}
where we used Proposition \ref{xaa2999e}. Thus by the boundary term in \eqref{Obata} we have $f=\alpha V$ for some $\alpha\in\mathbb{R}$ on $\mathcal{S}$. Similar to the above \textbf{Case 1 in $\mathbb{H}^{n}$}, we see $\Sigma=\mathcal{S}$ is a truncated umbilical hypersurface.

\

\noindent{\bf In unit Sphere $\mathbb{S}^{n}$.} We use sphere model \eqref{Sphere-model}. Thus $S_{K,\kappa}$ is a geodesic sphere or a totally geodesic hyperplane in $\mathbb{S}^{n}$. Let $\mathcal{S}:=\{x \in \Sigma \mid h^{\delta}(x)>0\}$. The proof is similar to the \textbf{Case 2 in $\mathbb{H}^{n}$}, we leave it to the interested readers.

In conclusion, we obtain that $\Sigma$ is a truncated umbilical hypersurface in $\mathbb{M}^{n}(K)$. The proof of Theorem \ref{Guo} is completed.

\qed
\

\
\

\noindent{\bf Proof of Theorem \ref{AF-free}.}\
The proof is close to J. Li and C. Xia \cite{LX}, small difference arises from the boundary computation. We prove it here for the reader's convenience.

By an algebraic computation, the inequality \eqref{AF-2} is equal to the following inequality
\begin{equation}\label{AF-thm}
  \int_{\Sigma}\left(H-\overline{H}^{V}\right)^{2}V dA\leq\frac{n-1}{n-2}\int_{\Sigma}\left(h_{\alpha\beta}-\frac{H}{n-1}g_{\alpha\beta}\right)^{2}V dA,
\end{equation}
where $\overline{H}^{V}=\frac{\int_{\Sigma}HVdA}{\int_{\Sigma}VdA}$. Therefore, we only need to prove the inequality \eqref{AF-thm}.\\

Let $f\in C^{\infty}(\bar{\Sigma})$ be a solution of
\begin{equation}\label{AF-f}
\begin{cases}{}
\Delta f-\frac{\Delta V}{V}f=H-\overline{H}^{V}, & \text{on}\ \Sigma,\\
\nabla_{\mu}f-\kappa f=0, & \text{on}\ \partial\Sigma=\partial T.
\end{cases}
\end{equation}
Here $\nabla$ and $\Delta$ are the gradient and Laplacian of $\Sigma$ respectively,  $\mu$ is conormal vector of $\Sigma$ which is equal to $\bar{N}$ along $\partial\Sigma$ due to free boundary assumption. In fact, it follows from the Fredholm alternative \cite{GT} that there exists a unique solution $\omega\in C^{\infty}(\bar{\Sigma})$ (up to an additive constant) to the following Neumann equation:
\begin{equation*}
\begin{cases}{}
\div (V^{2}\nabla\omega)=(H-\overline{H}^{V})V, & \text{on}\ \Sigma,\\
\nabla_{\mu}\omega=0, & \text{on}\ \partial\Sigma=\partial T.
\end{cases}
\end{equation*}
By \eqref{va3}, we see that $f=\omega V$ solve \eqref{AF-f}.\\

From Proposition \ref{xaa2}, we have
 \begin{equation}\label{free-boundary}
   V\nabla_{\mu}f-f\nabla_{\mu}V= V\nabla_{\bar{N}}f-f\nabla_{\bar{N}}V=V(\nabla_{\bar{N}}f-\kappa f)=0\,\,\,\text{along}\,\,\,\partial\Sigma.
 \end{equation}

For our convenience. Denote
 \begin{equation}
   A_{\alpha \beta}=\nabla_{\alpha \beta}^{2} f-\frac{\nabla_{\alpha \beta}^{2} V}{V} f\quad;\,\, A=\Delta f-\frac{\Delta V}{V} f,
 \end{equation}
 and
\begin{equation}\label{A-ring}
 \mathring{A}_{\alpha \beta}=A_{\alpha \beta}-\frac{1}{n-1} A g_{\alpha \beta}\quad;\,\,\mathring{h}_{\alpha \beta}=h_{\alpha \beta}-\frac{H}{n-1} g_{\alpha \beta}.
\end{equation}
By \eqref{AF-f}-\eqref{free-boundary} and integration by parts, we get
\begin{eqnarray}\label{AF-holder}
\int_{\Sigma} \left(H-\bar{H}^{V}\right)^{2}V dA&&=\int_{\Sigma}\left(H-\bar{H}^{V}\right)(V \Delta f-f \Delta V)\, dA\nonumber \\
&&=-\int_{\Sigma}\langle\nabla H, V \nabla f-f \nabla V\rangle \,dA+\int_{\partial\Sigma} (H-\overline{H}^{V})(V\nabla_{\mu}f-f\nabla_{\mu}V)\, ds\nonumber\\
&&=-\int_{\Sigma}\langle\nabla H, V \nabla f-f \nabla V\rangle\, dA.
\end{eqnarray}
By Codazzi property of $h_{\alpha \beta}$ in space forms, we know that
\begin{equation*}\label{Codazzi}
\nabla_{\alpha}\mathring{h}_{\alpha\beta}=\nabla_{\alpha}\left(h_{\alpha \beta}-\frac{H}{n-1} g_{\alpha \beta}\right)=\nabla_{\beta}H-\frac{1}{n-1}\nabla_{\beta}H=\frac{n-2}{n-1}\nabla_{\beta}H.
\end{equation*}
It follows that
\begin{eqnarray}
&&-\int_{\Sigma}\langle\nabla H, V \nabla f-f \nabla V\rangle\, dA\label{AF-holder}\\
&&=-\frac{n-1}{n-2} \int_{\Sigma}(\nabla_{\alpha}\mathring{h}_{\alpha \beta})\left(V \nabla_{\beta} f-f \nabla_{\beta} V\right) dA\nonumber\\
&&=\frac{n-1}{n-2} \int_{\Sigma} V\mathring{h}_{\alpha\beta}\mathring{A}_{\alpha \beta} dA-\frac{n-1}{n-2}\int_{\partial\Sigma}\left(h-\frac{H}{n-1}g\right)(V\nabla f-f\nabla V, \mu)\,ds\nonumber\\
&&=\frac{n-1}{n-2} \int_{\Sigma} V\mathring{h}_{\alpha\beta}\mathring{A}_{\alpha \beta} dA-\frac{n-1}{n-2}\int_{\partial\Sigma}\left(h(\mu,\mu)-\frac{H}{n-1}\right)(V\nabla_{\mu} f-f\nabla_{\mu} V)\,ds\nonumber\\
&&=\frac{n-1}{n-2} \int_{\Sigma} V\mathring{h}_{\alpha\beta} \mathring{A}_{\alpha \beta}\, dA\nonumber\\
&&\leq \frac{n-1}{n-2}\left(\int_{\Sigma} V|\mathring{h}_{\alpha \beta}|^{2}dA\right)^{\frac{1}{2}}\left(\int_{\Sigma} V\left|\mathring{A}_{\alpha \beta}\right|^{2}dA\right)^{\frac{1}{2}}.\nonumber
\end{eqnarray}
Here we use the fact $h(e, \mu)=0$ for any $e\in T(\p \Sigma)$. Namely, $\mu$ is a principal direction of $\partial\Sigma$ in $\Sigma$ due to $\partial\Sigma\subseteq S_{K,\kappa}$ (see \cite[Proposition 2.1]{WX}). In the last inequality we applied the H\"{o}lder inequality.

Since $\Sigma$ is an $(n-1)$-dimensional manifold with boundary $\partial\Sigma$, we use Theorem \ref{QLX} for $\Omega=\Sigma$ to get
\begin{equation*}
  \int_{\Sigma} V\left(\Delta f-\frac{\Delta V}{V} f\right)^{2}dA \geq \int_{\Sigma} V\left(\nabla^{2}f-\frac{\nabla^{2}V}{V} f\right)^{2}dA,
\end{equation*}
where we use sub-static condition \eqref{sub-sta} and boundary condition \eqref{va3}, \eqref{free-boundary}. It implies that
\begin{eqnarray}\label{AF-yu}
 \int_{\Sigma} V\left|\mathring{A}_{\alpha \beta}\right|^{2}dA&=&\int_{\Sigma} V\left(\nabla^{2}f-\frac{\nabla^{2}V}{V} f\right)^{2}dA-\frac{1}{n-1}\int_{\Sigma}V\left(\Delta f-\frac{\Delta V}{V}f\right)^{2} dA\\
 &\leq& \frac{n-2}{n-1} \int_{\Sigma} V\left(\Delta f-\frac{\Delta V}{V} f\right)^{2}dA.\nonumber
\end{eqnarray}
Combining \eqref{AF-holder}-\eqref{AF-yu} with equation \eqref{AF-f}, we conclude \eqref{AF-thm} holds.
\vspace{0.1cm}


If the equality in \eqref{AF-thm} holds, by checking the above proof, we see
\begin{equation}\label{hab}
  \mathring{h}_{\alpha \beta}=h_{\alpha \beta}-\frac{H}{n-1} g_{\alpha \beta}=\lambda\mathring{A}_{\alpha \beta}\quad \text{for some constant}\,\,\lambda.
\end{equation}
and
\begin{equation}\label{rrr}
  \left(\Delta V g-\nabla^{2} V+V {Ric}\right)\left(\nabla \frac{f}{V}, \nabla \frac{f}{V}\right)=0.
\end{equation}

Since there exists at least one point $p$ such that $\Delta V g-\nabla^{2} V+V {Ric}$ is positive definite at $p$. From continuity we know that $\Delta V g-\nabla^{2} V+V {Ric}$ is positive definite in a neighborhood $\mathcal{N}_{p}$ of $p$. From the equation \eqref{rrr}, we get $f=\eta V$ for some constant $\eta\in\mathbb{R}$ in $\mathcal{N}_{p}$.

According to \eqref{A-ring} and \eqref{hab}, we have $h_{\alpha \beta}=\frac{H}{n-1} g_{\alpha \beta}=c g_{\alpha \beta}$ in ${\mathcal{\overline{N}}_{p}}$ for some constant $c>0$.
By the proof of Theorem \ref{Guo}, we get $h-\frac{V_{\nu}}{V}g>0$ in ${\mathcal{\overline{N}}_{p}}$. It follows that $\Delta V g-\nabla^{2} V+V {Ric}=(n-2) c\left(c V-\nabla_{\nu}V\right) g>0$ in ${\mathcal{\overline{N}}_{p}}$. Therefore $\mathcal{S}_{1}:=\left\{x \in \Sigma\mid\Delta V g-\nabla^{2} V+V {Ric}>0\right\}$ is non-empty open and closed set. Thus $\mathcal{S}_{1}=\Sigma$. Repeat the argument above to $\Sigma$, we obtain that $\Sigma$ is umbilical.

\qed
\

\noindent{\bf Proof of Corollary \ref{cor-AF-free}.}\
From the Gauss equation and Proposition \ref{xaa2}, we have
\begin{eqnarray*}
\Delta V&=&\bar{\Delta} V-\bar{\nabla}^{2}_{\nu \nu}V-H \nabla_{\nu}V=-(n-1)K V-H \nabla_{\nu}V,\\
\nabla^{2}_{\alpha \beta} V&=&\bar{\nabla}^{2}_{\alpha \beta} V-(\nabla_{\nu} V) h_{\alpha \beta}=-K V g_{\alpha \beta}-(\nabla_{\nu} V) h_{\alpha \beta}, \\
Ric_{\alpha \beta}&=&(n-2)K g_{\alpha \beta}+H h_{\alpha \beta}-h_{\alpha \gamma} h_{\beta \gamma}.
\end{eqnarray*}
Here $Ric_{\alpha \beta}$ is the Ricci tensor of $\Sigma$. Thus by our convexity hypothesis, we get
\begin{equation}\label{hhhhh}
  \Delta V g_{\alpha \beta}-\nabla^{2}_{\alpha \beta}V+V Ric_{\alpha \beta}=\left(V h_{\beta \gamma}-\nabla_{\nu} V g_{\beta \gamma}\right)\left(H g_{\alpha \gamma}-h_{\alpha \gamma}\right)\geq0.
\end{equation}

Since $\Sigma$ is a compact hypersurface with free boundary supported on $S_{K,\kappa}$. By the maximum principle and inequality \eqref{hhhhh}, there exists at least one point $p$ at which $\Delta V g-\nabla^{2} V+V {Ric}$ is positive definite.
\

\qed

\

\noindent{\bf Proof of Theorem \ref{Almost-free}.}\ Following the above method of Theorem \ref{AF-free}, we replace the second fundamental form $h$ with Ricci curvature tensor $Ric$ of $\Sigma$ in \eqref{A-ring}. From Gauss equation and Codazzi equation in space forms, we know that $\nabla_{\beta}\mathcal{R}=2\nabla_{\alpha}(Ric_{\alpha\beta})$ on $\Sigma$. On the other hand, $\mu$ is a principal direction of $\partial\Sigma$ in $\Sigma$ since $S_{K,\kappa}$ is umbilical (see \cite[Proposition 2.1]{WX}), which implies $Ric(e, \mu)=0$ for any $e\in T(\p \Sigma)$ along $\partial\Sigma$. The proof is completed. \qed

\

\




{\bf Acknowledgements.} The paper was carried out while I was visiting the Mathematical Institute, the University of Freiburg under the support by the China Scholarship Council. I would like to thank the department for its hospitality. I also thank Professor Guofang Wang and Professor Chao Xia for useful comments and for their constant supports.

\

\end{document}